  \tikzstyle{vertex} = [circle,fill=blue!20]
  \tikzstyle{Svertex} = [circle,fill=blue!20,minimum size = 2mm,inner sep = 0pt]
  \tikzstyle{edge} = [draw,thick,-]
  \tikzstyle{thickedge} = [draw,line width=0.75mm,-]
  \tikzstyle{arc} = [draw,thick,-,dashed]
  \tikzstyle{label} = [font=\normalsize,rectangle,fill = white, text = black, fill opacity = 1, text opacity = 1, scale = 0.88]
  \tikzstyle{label2} = [font=\normalsize,rectangle,fill = gray, text = black, fill opacity = 1, text opacity = 1, scale = 0.88]
\newtheorem{theorem}{Theorem}[section]
\newtheorem{lemma}[theorem]{Lemma}
\newtheorem{corollary}[theorem]{Corollary}
\newtheorem{conjecture}[theorem]{Conjecture}
\newtheorem{obs}[theorem]{Observation}
\newcommand{\spc}{\hspace{0.08in}}
\newcommand{\mg}[1]{\textcolor{black!100}{#1}}
\begin{document}

\title{On Clique Immersions in Line Graphs}

\author{Michael Guyer}
\address{Department of Mathematics and Statistics, Auburn University, Auburn, AL 36849}
\email{mdg0036@auburn.edu}

\author{Jessica McDonald}
\address{Department of Mathematics and Statistics, Auburn University, Auburn, AL 36849}
\email{mcdonald@auburn.edu}
\thanks{Supported in part by NSF grant DMS-1600551.}

\date{}

\begin{abstract}
We prove that if $L(G)$ immerses $K_t$ then $L(mG)$ immerses $K_{mt}$, where $mG$ is the graph obtained from $G$ by replacing each edge in $G$ with a parallel edge of multiplicity $m$. This implies that when $G$ is a simple graph, $L(mG)$ satisfies a conjecture of Abu-Khzam and Langston.
We also show that when $G$ is a line graph, $G$ has a $K_t$-immersion iff $G$ has a $K_t$-minor whenever $t\leq 4$, but this equivalence fails in both directions when \mg{$t\geq5$}.
\end{abstract}

\maketitle

\section{Introduction}

In this paper, a graph is permitted to have parallel edges (but no loops), unless it is explicitly said to be simple.

Immersion is a containment relation in graphs that is similar to the well-known minor relation, but is incomparable. Formally, a pair of adjacent edges $uv$ and $vw$ in a graph
are \emph{split off} from their common vertex $v$ by deleting the edges
$uv$ and $vw$, and adding the edge $uw$ (unless it forms a loop, i.e., $u=w$). Given graphs $G, H$, we say that $G$ has an \emph{$H$-immersion} if $H$ can be obtained from a subgraph of $G$ by splitting off pairs of
edges and removing isolated vertices. In comparison, $G$ has an \emph{$H$-minor} (\emph{topological $H$-minor}) if $H$ can be obtained from a subgraph of $G$ by contracting edges (suppressing vertices of degree two). The existence of a topological $H$-minor immediately implies both an $H$-minor and an $H$-immersion.  Immersions have gained considerable interest in the last number of years, with a major factor being the publication of Robertson and Seymour's \cite{RS23} proof that graphs are well-quasi-ordered by immersion (see for example \cite{bustamente}, \cite{dvorak}, \cite{gauthier}). An immersion conjecture of particular interest relates the ability to immerse a large clique in $G$ to the chromatic number $\chi(G)$ of $G$ (i.e. the minimum number of colours needed to assign colours to the vertices of $G$ so that adjacent vertices receive different colours).

\begin{conjecture} \label{AKL} \emph{(Abu-Khzam and Langston \cite{AKL})} For any graph $G$,
$$\chi(G)\geq t \spc\Rightarrow\spc \textrm{$G$ has a $K_t$-immersion}.$$
\end{conjecture}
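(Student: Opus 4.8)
Because this is the immersion analogue of Hadwiger's conjecture, I do not expect a full proof to be within reach; the realistic plan is to install the standard minimal-counterexample framework, push it as far as the existing immersion machinery allows, and isolate precisely where it stalls. \textbf{The plan} is to suppose the statement fails, let $t$ be the least value for which some graph satisfies $\chi(G)\geq t$ yet has no $K_t$-immersion, and among all graphs with $\chi(G)=t$ and no $K_t$-immersion choose $G$ with the fewest vertices and then the fewest edges. Minimality of $t$ supplies the induction hypothesis that every graph with $\chi\geq t-1$ immerses $K_{t-1}$ (so $G$ itself immerses $K_{t-1}$), while minimality of $G$ makes it vertex- and edge-$t$-critical; this forces $\delta(G)\geq t-1$ and $2$-connectivity, and I would try to leverage criticality to forbid certain small separations.

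Two complementary tools would drive the argument. The first is the theorem of DeVos, Dvo\v{r}\'ak, Fox, McDonald, Mohar and Scheide that minimum degree at least $200t$ already forces a $K_t$-immersion (the constant has since been improved): if the critical graph $G$ had minimum degree of this order we would be done immediately. The second is the edge-disjoint-paths viewpoint: a $K_t$-immersion is exactly a choice of $t$ \emph{branch vertices} together with $\binom{t}{2}$ pairwise edge-disjoint paths joining them in pairs, so by Menger's theorem and Mader's splitting-off theorem the existence of such an immersion is governed by \emph{local} edge-connectivity. Here the natural attack is \emph{augmentation}: start from the inductive $K_{t-1}$-immersion on branch vertices $v_1,\dots,v_{t-1}$ and locate a new high-degree branch vertex $v_t$ joined to each $v_i$ by a further path, with these $t-1$ paths edge-disjoint from one another and from the existing ones; splitting-off would be used to preserve the relevant cut values while simplifying the host graph around the chosen branch set.

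\textbf{The main obstacle}, and the reason the conjecture remains open, is the mismatch between hypothesis and conclusion: vertex colouring is controlled by \emph{vertex} cuts (criticality yields $2$-connectivity, no clique separators, and $\delta\geq t-1$), whereas $K_t$-immersion is controlled by \emph{edge} cuts, and colouring-criticality does not obviously supply any edge-connectivity whatsoever. Moreover $\delta\geq t-1$ is far too weak on its own, since even when every degree is large the high-degree vertices may be pairwise separated by sparse edge cuts, so that no region contains $t$ vertices mutually connected enough to carry all $\binom{t}{2}$ edge-disjoint paths. Converting $\chi(G)\geq t$ into exactly this kind of local edge-connectivity among $t$ vertices of high degree is the crux. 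I would expect to be able to recover the small cases by the augmentation scheme and to reduce the general problem to highly edge-connected edge-critical \emph{cores}, but closing the gap between $\delta(G)\geq t-1$ and the edge-connectivity that the immersion demands is exactly where this plan, like all previous attempts, would stall.
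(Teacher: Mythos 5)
The statement you were assigned is Conjecture \ref{AKL}, which is an \emph{open problem}: the paper contains no proof of it, and your proposal --- candidly, and correctly --- does not claim one either. So the relevant review question is not a comparison of arguments but whether your sketch is an accurate map of the terrain and whether it establishes anything. It does not establish the statement, and it could not be repaired into doing so: the minimal-counterexample setup plus the minimum-degree immersion theorem plus splitting-off is exactly the toolkit that is already known to fall short, and your own final paragraph concedes the stall. Note also that the paper's actual contribution to Conjecture \ref{AKL} is orthogonal to your plan: it verifies the conjecture only for special families, namely $L(mH)$ whenever $L(H)$ already satisfies it (Theorem \ref{mH} and its corollary), via the semi-edge-disjoint-paths reformulation (Conjecture \ref{AKLLG}) and an explicit latin-square construction of paths --- no criticality or connectivity argument appears anywhere. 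If your goal was to engage this paper, the line-graph reformulation is the setting to work in.

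Two concrete inaccuracies in your sketch deserve correction. First, your claim that colouring-criticality ``does not obviously supply any edge-connectivity whatsoever'' is wrong in letter: by a classical theorem of Dirac, every $t$-chromatic-critical graph is $(t-1)$-edge-connected. It is, however, right in spirit, and you should state the true obstruction quantitatively: a $K_t$-immersion forces every edge cut separating the branch set into parts of sizes $a$ and $t-a$ to have order at least $a(t-a)$, so balanced splits demand cuts of order $\lfloor t/2\rfloor\lceil t/2\rceil \approx t^2/4$, while criticality guarantees only $t-1$; the gap is a factor of $t$, not merely ``some versus none.'' Second, your expectation of ``recovering the small cases by the augmentation scheme'' overreaches: the known cases $t\le 7$ (DeVos, Kawarabayashi, Mohar, and Okamura \cite{DKMO}, cited in the paper) proceed by structural analysis of small edge cuts rather than by extending a $K_{t-1}$-immersion, and naive augmentation is known to be obstructed --- a $K_{t-1}$-immersion in a $t$-chromatic graph need not extend, since the candidate $t$-th branch vertex may be separated from the existing branch set by a sparse edge cut, which is precisely the phenomenon exhibited (in the line-graph setting) by the paper's Figure \ref{RScounter}, where a graph has a $K_5$-minor but no $K_5$-immersion despite meeting the obvious degree conditions.
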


Conjecture \ref{AKL} is an immersion-analog of Hadwiger's \cite{hadwiger} famous conjecture from the 1940s, namely that for any graph $G$, if $\chi(G)\geq t$ then $G$ has a $K_t$-minor.
Hadwiger's Conjecture is known to be true up to $t=6$, where it is equivalent to the Four Color Theorem (Robertson, Seymour and Thomas \cite{RST1}), while Conjecture \ref{AKL} is known to be true up to $t=7$ (DeVos, Kawarabayashi, Mohar, and Okamura \cite{DKMO}).

One class of graphs for which the Abu-Khzam--Langston Conjecture has not yet been verified (although Hadwiger's Conjecture has been) is line graphs. Given a graph $G$, the \emph{line graph} of $G$, denoted by $L(G)$, is formed by defining a vertex for each edge in $G$, and joining two vertices by an edge if the corresponding edges in $G$ are adjacent. Note that this definition makes complete sense whether $G$ is simple or not (although the graph $L(G)$ is always simple). Line graphs form a strictly larger class than line graphs of simple graphs, as forbidden-subgraph characterizations by Bermond and Meyer \cite{bm} and Beineke \cite{beineke}, respectively, make plain.

In Section 2 we shall describe a useful reformulation of Conjecture \ref{AKL} in the case when $G$ is a line graph, introducing the notion of \emph{semi-edge-disjoint paths}.  We shall also discuss work of Reed and Seymour \cite{reedseymour}, who proved that Hadwiger's Conjecture holds for all line graphs. Reed and Seymour's proof immediately implies that Conjecture \ref{AKL} holds for line graphs of simple graphs, but not for all line graphs. It is worth noting that there is also a completely different proof by Thomassen \cite{thomassen} that Conjecture \ref{AKL} holds for line graphs of simple graphs, however the `simple' assumption is crucial in his argument. (Thomassen actually verified Haj\'{o}s' Conjecture for any line graph of a simple graph, proving that for such a $G$, $\chi(G)\geq t$ implies that $G$ contains a topological $K_t$-minor). Conjecture \ref{AKL} is still open for line graphs of non-simple graphs. However, in Section 3 of this paper, we close the case when the `non-simple graph' has constant edge-multiplicity, by proving the following result. In what follows, for any $m\geq 2$, let $mG$ be the graph obtained by replacing each edge $e$ in a graph $G$ with $m$ copies of $e$.

\begin{theorem}\label{mH}
Let $H$ be a graph such that $G=L(H)$ has a $K_t$-immersion. Then, for any $m\geq 2$, $L(mH)$ has a $K_{mt}$-immersion.
\end{theorem}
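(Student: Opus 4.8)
The plan is to recast the statement in terms of graph blow-ups. Writing $G[K_m]$ for the lexicographic product in which each vertex of $G$ is replaced by a clique $K_m$ and each edge of $G$ by a complete bipartite graph $K_{m,m}$, I would first observe that $L(mH)$ is exactly $L(H)[K_m]$. Indeed, a vertex of $L(mH)$ is a pair $(e,i)$ with $e\in E(H)$ and $i\in[m]$; the $m$ copies of any fixed $e$ form a clique (parallel edges share both endpoints), while copies of $e$ and $f$ with $e\neq f$ are all mutually adjacent precisely when $e$ and $f$ meet in $H$, i.e.\ when $e,f$ are adjacent in $L(H)$. Dually, $K_{mt}=K_t[K_m]$. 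Thus the theorem becomes the purely structural statement that if the (always simple) graph $G=L(H)$ immerses $K_t$, then $G[K_m]$ immerses $K_{mt}$, and I would prove this directly.

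So I would start from a $K_t$-immersion in $G$: branch vertices $b_1,\dots,b_t$ together with pairwise edge-disjoint paths $P_{ij}$ joining $b_i$ to $b_j$ for all $i<j$. In $G[K_m]$, I take as branch vertices the $mt$ vertices $b_i^k$ ($i\in[t]$, $k\in[m]$), and organize the required $\binom{mt}{2}$ connecting paths into two types. Within a single blob $B_i=\{b_i^1,\dots,b_i^m\}$, each pair $b_i^k,b_i^l$ is joined by the corresponding internal clique edge of $B_i$; these are pairwise distinct and are used by nothing else. The real work lies in the cross-blob pairs $b_i^k,b_j^l$ with $i\neq j$: for each fixed $i<j$ I need $m^2$ edge-disjoint paths, one for every pair $(k,l)$, and I would route all of them through the ``corridor'' obtained by lifting the single path $P_{ij}$.

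The heart of the argument, and the step I expect to be the main obstacle, is a routing lemma: a path $X_0X_1\cdots X_s$ in $G$ lifts to $m^2$ pairwise edge-disjoint paths in $G[K_m]$ that realize all $m^2$ endpoint pairs between the end-blobs $X_0$ and $X_s$, while using only blown-up (between-blob) edges and never the internal clique edges of any intermediate blob. I would prove this by identifying each blob with $\mathbb{Z}_m$ and routing the path indexed by $(k,l)$ through the vertex $c_r:=a_rk+b_rl \pmod m$ of $X_r$, where $(a_0,b_0)=(1,0)$, $(a_s,b_s)=(0,1)$, and $(a_r,b_r)=(1,r)$ for $0<r<s$. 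Each consecutive transition $(k,l)\mapsto(a_rk+b_rl,\,a_{r+1}k+b_{r+1}l)$ has determinant exactly $1$, which is a unit in every $\mathbb{Z}_m$, so the transition is a bijection of $\mathbb{Z}_m^2$; this forces the paths to traverse each intermediate $K_{m,m}$ using all of its $m^2$ edges exactly once (hence edge-disjointly inside the corridor), while the endpoints $c_0=k$ and $c_s=l$ range over all pairs.

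Finally I would assemble the global immersion and verify edge-disjointness across corridors. Because the paths $P_{ij}$ are pairwise edge-disjoint in $G$, distinct corridors use disjoint families of $K_{m,m}$ gadgets in $G[K_m]$, so their lifted paths are automatically edge-disjoint; and since corridors use no internal blob edges, they never clash with the within-blob connections either. The one point requiring care is congestion where several corridors, or a corridor and a branch blob, pass through a common vertex $v$ of $G$: here the corridors may legitimately share blob vertices (immersion permits shared internal vertices), but, again by edge-disjointness of the $P_{ij}$, the edges of $G$ incident to $v$ used by different corridors are distinct, so the corresponding gadgets—and hence all the lifted edges—remain disjoint. This handles weak immersions directly, and the same construction preserves strongness whenever the given immersion is strong.
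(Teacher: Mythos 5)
Your proposal is correct, but it takes a genuinely different route from the paper. The paper never leaves the graph $H$: it uses Observation \ref{sed} to reduce the problem to finding semi-edge-disjoint paths among terminal edges of $mH$, lifts each path $e_1,\dots,e_\ell$ of $\mathcal{P}$ by alternating two ``levels'' $e_1^i,e_2^j,e_3^i,\dots$, and then needs a Latin square of order $m$ to patch the last step when $\ell$ is odd (where the naive alternation would end on level $i$ instead of $j$). You instead work in $L(H)$ itself, observe that $L(mH)\cong L(H)[K_m]$ and $K_{mt}\cong K_t[K_m]$, and prove the strictly more general fact that the $m$-fold blow-up of \emph{any} graph immersing $K_t$ immerses $K_{mt}$; your routing through $c_r=a_rk+b_rl \pmod m$ with coefficient pairs $(1,0),(1,1),\dots,(1,s-1),(0,1)$ gives unit-determinant transition matrices at every step, which is exactly the injectivity needed for edge-disjointness inside each corridor, and it handles all path lengths uniformly with no parity case split. (In effect, your determinant-$1$ condition imposes a Latin-square-type constraint at every intermediate blob, so the two constructions are close cousins; the paper only needs the Latin square once, at the penultimate edge.) Your version buys generality and a cleaner, case-free lift; the paper's version buys consistency with the semi-edge-disjoint framework it reuses throughout Sections 2--4. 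The remaining details you flag are all sound: distinct corridors live on disjoint $K_{m,m}$ gadgets because the $P_{ij}$ are edge-disjoint in $G$, corridors avoid internal blob edges so they cannot clash with the within-blob clique connections, and the lifted walks are genuine paths since the blobs along a path of $G$ are pairwise disjoint.
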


The result of Theorem \ref{mH} is tight. Taking $H = K_3$, we have that $G=L(H)=K_3$ which trivially immerses $K_3$ but no larger clique. Observe that for any $m\geq 2$, $L(m K_3) = K_{3m}$, which does not immerse $K_n$ for $n$ any larger than $3m$.

Let $H$ be a graph such that $G=L(H)$ satisfies Conjecture \ref{AKL} (for example, if $H$ is any simple graph). Say that $\chi(G)=t$  and that $G$ has a $K_t$-immersion. Then, for any $m\geq 2$,  Theorem \ref{mH} tells us that $L(mH)$ immerses $K_{mt}$. Note that $\chi(L(mH)) \leq mt$, as we may decompose $mH$ into $m$ disjoint copies of $H$ and use $m$ disjoint color sets of size $t$ to color them. Therefore, we get that $L(mG)$ satisfies Conjecture \ref{AKL}. That is, we get the following Corollary.

\begin{corollary}
Let $H$ be a graph such that $G=L(H)$ satisfies Conjecture \ref{AKL} (for example, if $H$ is any simple graph). Then, for any $m\geq 2$, $L(mH)$ satisfies Conjecture \ref{AKL}.
\end{corollary}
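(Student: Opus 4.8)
The plan is to read the Corollary off Theorem \ref{mH}, with the only genuine work being to pin down where $L(mH)$ sits in the immersion order and to bound its chromatic number. Write $t=\chi(G)=\chi(L(H))$. Because $G$ satisfies Conjecture \ref{AKL}, applying that conjecture with the value $\chi(G)=t$ produces a $K_t$-immersion in $G$. Theorem \ref{mH} then applies verbatim and yields that $L(mH)$ immerses $K_{mt}$; since a $K_{mt}$-immersion contains a $K_s$-immersion for every $s\leq mt$, we get that $L(mH)$ immerses $K_s$ for all such $s$.

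Next I would prove the matching bound $\chi(L(mH))\leq mt$ by a layered colouring. The edge set of $mH$ splits into $m$ classes $E_1,\dots,E_m$, each a faithful copy of $E(H)$; the subgraph of $mH$ carried by $E_i$ is isomorphic to $H$, so the subgraph of $L(mH)$ induced on the vertices of class $E_i$ is isomorphic to $G=L(H)$ (this holds whether or not $H$ is simple). I would colour each such induced copy of $G$ with a proper $t$-colouring, but drawing the colours for class $E_i$ from a private palette $P_i$, with $P_1,\dots,P_m$ pairwise disjoint. Any two vertices of $L(mH)$ from different classes then receive colours from different palettes and so are distinguished automatically, while each class is coloured properly by construction; hence the whole colouring is proper and uses $mt$ colours.

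With $L(mH)$ immersing $K_{mt}$ and satisfying $\chi(L(mH))\leq mt$, verifying Conjecture \ref{AKL} for $F=L(mH)$ is immediate: for any $s$ with $\chi(F)\geq s$ we have $s\leq\chi(F)\leq mt$, and $F$ immerses $K_{mt}\supseteq K_s$, so $F$ immerses $K_s$. I expect no real obstacle here, since Theorem \ref{mH} does all the heavy lifting; the only point demanding care is the properness of the layered colouring across classes, and that is exactly what forcing the palettes $P_1,\dots,P_m$ to be disjoint guarantees, independently of how the classes interleave on the vertices of $H$.
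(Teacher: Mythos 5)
Your proposal is correct and follows essentially the same route as the paper: apply the conjecture to get a $K_t$-immersion in $G$ with $t=\chi(G)$, invoke Theorem \ref{mH} to immerse $K_{mt}$ in $L(mH)$, and bound $\chi(L(mH))\leq mt$ by decomposing $mH$ into $m$ copies of $H$ coloured from disjoint palettes. The only difference is cosmetic: you phrase the colouring on the vertices of $L(mH)$ while the paper phrases it as an edge-colouring of $mH$, and these are the same thing.
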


An analog of Theorem \ref{mH} for minors turns out to be trivial, as we shall remark in Section 3. While minors and immersions are incomparable in general, it is worth asking whether or not this incomparability holds when restricted to clique immersions and clique minors in line graphs.  In Section 4 we consider this question, and prove the following.

\begin{theorem}\label{minorimm} Let $G$ be a line graph. If $t\leq 4$, then $G$ has a $K_t$-immersion iff $G$ has a $K_t$-minor. When \mg{$t\geq5$} this equivalence fails in both directions: there exists a line graph with a \mg{$K_t$}-immersion but no \mg{$K_t$}-minor, and there exists a line graph with a \mg{$K_t$}-minor but no \mg{$K_t$}-immersion.
\end{theorem}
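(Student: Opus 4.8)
My plan is to treat the regimes $t\le 4$ and $t\ge 5$ separately, and within each to separate the two implication-directions.

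For $t\le 4$ I would first dispatch the direction ``$K_t$-minor $\Rightarrow$ $K_t$-immersion,'' which in fact holds for \emph{every} graph: since $\Delta(K_t)\le 3$ when $t\le 4$, a classical fact says a graph contains $K_t$ as a minor iff it contains a subdivision of $K_t$ (branch vertices of degree at most $3$ can always be realized topologically), and a subdivision immediately yields an immersion. The content is therefore the converse, ``$K_t$-immersion $\Rightarrow$ $K_t$-minor,'' for line graphs. For $t\le 3$ this is immediate for all graphs ($K_3$-immersion and $K_3$-minor both amount to the underlying simple graph containing a cycle). The crux is $t=4$, and here I would use the structure of $G=L(H)$ through the Section 2 reformulation: a $K_4$-immersion gives four edges of $H$ together with six semi-edge-disjoint connecting trails. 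I would then analyze this configuration in $H$ and show that the Krausz cliques of $L(H)$ (the stars $\{e:v\in e\}$ at vertices $v$ of $H$) can be assembled into four connected, pairwise-adjacent branch sets, i.e.\ a $K_4$-minor. The real work I expect is ruling out the ways the six trails can overlap at vertices of $H$ without yielding four vertex-disjoint branch sets; this is exactly the phenomenon that fails for general (non-line) graphs, so the line-graph hypothesis must be used precisely here.

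For $t\ge 5$ I would exhibit two explicit families. For a line graph with a $K_t$-minor but no $K_t$-immersion I would use a degree obstruction: every terminal of a $K_t$-immersion has $t-1$ edge-disjoint paths leaving it and hence degree at least $t-1$, so a graph of maximum degree at most $t-2$ has no $K_t$-immersion. I would start from a subcubic graph $H_0$ with a $K_t$-minor (e.g.\ the ``cubic inflation'' of $K_t$, replacing each vertex by a small cubic gadget), subdivide every edge once to obtain a subcubic $H$, and set $G=L(H)$. Since every edge of $H$ now meets a degree-$2$ vertex, $\Delta(G)=\max_{uv\in E(H)}(d_u+d_v)-2\le 3\le t-2$, so $G$ has no $K_t$-immersion; meanwhile a short argument (connected branch sets of $H$ induce connected edge-sets of $L(H)$, and each connecting edge of $H$, placed in one incident branch set, supplies the missing adjacency) shows $G$ inherits the $K_t$-minor of $H$.

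The reverse failure — a line graph with a $K_t$-immersion but no $K_t$-minor — is where I expect the main difficulty. The effect I want to exploit is that immersions profit from edge-multiplicity and high edge-connectivity while minors do not, as is visible already for multigraphs (the diamond with every edge doubled immerses $K_4$ yet has no $K_4$-minor). The obstacle is that a line graph is simple, and the natural ways to inject multiplicity backfire: replacing each edge of $H$ by $m$ parallel copies turns $L(H)$ into the clique blow-up $L(H)[K_m]$, which raises the Hadwiger number at least as fast as the immersion number, while routing many edge-disjoint paths through a single high-multiplicity hub $v$ forces $\deg_H(v)\ge t$ and hence a $K_t$ inside $L(H)$. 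The construction must therefore \emph{decouple} edge-connectivity from clique-minor structure: I would build $H$ with $\Delta(H)\le t-1$ (so no single Krausz clique already contains $K_t$) and with no large minor, yet with $t$ distinguished edges joined by $\binom t2$ edge-disjoint trails whose internal vertices are deliberately shared, so the trails cannot be simultaneously realized as vertex-disjoint branch sets. Verifying that the resulting $L(H)$ genuinely has no $K_t$-minor — not merely that $H$ does not — is the step I expect to be most delicate, and is the heart of the $t\ge 5$ separation.
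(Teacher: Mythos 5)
Your proposal correctly identifies the architecture of the theorem, and one piece of it is a complete, valid alternative to the paper: for ``$K_t$-minor but no $K_t$-immersion'' your subdivided subcubic construction works. Every edge of your $H$ meets a degree-$2$ vertex, so $\Delta(L(H))\le 3<t-1$ and the degree obstruction kills the immersion, while the branch sets of the $K_t$-minor in $H$, augmented by assigning each connecting edge to one of its two incident branch sets, give $t$ connected, pairwise edge-disjoint, pairwise vertex-intersecting edge-subgraphs and hence a $K_t$-minor in $L(H)$. The paper instead uses the explicit graph of Figure \ref{RScounter} (two parallel $vw$-edges plus three $u$--$\{v,w\}$ paths) and the same degree obstruction; your route is no harder and arguably more systematic.

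However, there are two genuine gaps. First, for $t=4$ the direction ``$K_4$-immersion $\Rightarrow$ $K_4$-minor'' is only a plan: you say you would assemble Krausz cliques into four branch sets and that ``the real work'' is ruling out bad overlaps of the six trails, but you do not do that work, and this is precisely where the line-graph hypothesis must bite (Figure \ref{K4ex} shows the implication is false otherwise). The paper handles this not by manipulating the six trails directly but by proving a structural characterization (Theorem \ref{K4}): if $\Delta(H)\le 3$ and no two cycles of $H$ share an edge (excepting pairs of digons), then $H$ is a tree of vertex-disjoint cycles, and a cut-edge argument shows at most three terminal edges can be mutually joined by semi-edge-disjoint paths; the converse directions follow from Lemma \ref{twocycles} and the $\Delta(H)\ge4$ case. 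Some such structural reduction is needed to make your sketch into a proof. Second, and more seriously, for ``$K_t$-immersion but no $K_t$-minor'' you supply no construction at all --- only a list of desiderata and an accurate diagnosis of why naive multiplicity tricks fail. You yourself flag the verification of $K_t$-minor-freeness of $L(H)$ as ``the heart'' of this direction, and it is exactly the part that is absent. The paper takes $H$ to be a triangle on $\{x_1,x_2,x_3\}$ completely joined to an independent set of $t-4$ further vertices, builds the immersion explicitly via semi-edge-disjoint paths, and disposes of the minor-freeness of $L(H)$ by showing it is a subgraph of a graph proved $K_t$-minor-free by Cames van Batenburg et al.\ \cite{counterex}; without either that citation or an independent minor-freeness argument, this half of the $t\ge5$ statement remains unproved in your write-up.
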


The main work in proving Theorem \ref{minorimm} is the $t=4$ case, where we in fact prove a characterization for $K_4$-immersions and $K_4$-minors in line graphs. The equivalence in Theorem \ref{minorimm} fails for $t=4$ if $G$ is not a line graph, and a simple example is provided in Section 4. In terms of the \mg{$t\geq5$} result for line graphs, we will see in Section 4 that there are examples for any $t\geq 5$ of a line graph with a $K_t$-minor but no $K_t$-immersion. \mg{With a bit more care, we will also see examples in the other direction, i.e. line graphs that immerse $K_t$ but contain no $K_t$-minor.}

\section{Semi-edge-disjoint paths}

An equivalent and often useful definition of immersion is that $G$ has an $H$-immersion if there is a 1-1 function $\phi: V(H)\to V(G)$ such that for each edge $uv\in E(H)$, there is a path $P_{uv}$ in $G$ joining vertices $\phi(u)$ and $\phi(v)$, and the paths $P_{uv}$ are pairwise edge-disjoint for all $uv\in E(H)$. In this context we call the vertices of $\{\phi(v): v\in V(H)\}$ the \emph{terminals} of the $H$-immersion.

Let $\mathcal{P}$ be a set of paths in a graph $G$. We say that $\mathcal{P}$ is \emph{semi-edge-disjoint} if, for any 2-edge-path $P_0=(v_1, e_1, v_2, e_2, v_3)$ in $G$, $P_0$ is a subpath of at most one path in $\mathcal{P}$. Note that paths can have multiple edges in common and still be semi-edge-disjoint, we are just requiring that adjacencies between edges are not repeated. In what follows, by a path from edge $e$ to edge $f$ in $G$ (or equivalently, a path between $e$ and $f$ in $G$), we mean a path in $G$ whose first edge is $e$ and whose last edge is $f$.

\begin{obs}\label{sed}
Let $G$ be a line graph with $G=L(H)$. Then $G$ has a $K_t$-immersion if and only $H$ contains a set of $t$ distinct edges and a path between each pair of these edges such that this set of paths is semi-edge-disjoint.
\end{obs}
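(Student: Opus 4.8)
The plan is to set up a dictionary between $L(H)$ and $H$ and then read off both implications of the Observation. Recall that the vertices of $G=L(H)$ are the edges of $H$, and two such vertices are adjacent in $G$ exactly when the corresponding edges of $H$ share an endpoint. Thus an edge of $G$ corresponds to an unordered pair of edges of $H$ meeting at a common vertex, i.e.\ (ignoring the degenerate case of parallel edges, which cannot occur inside a genuine path) to a $2$-edge-path of $H$. Consequently a path in $G$ from the vertex $e$ to the vertex $f$ is nothing but a sequence of edges $e=f_0,f_1,\dots,f_\ell=f$ of $H$ with each consecutive pair adjacent --- that is, a walk in $H$ whose first edge is $e$ and whose last edge is $f$, and whose consecutive turns are precisely the $2$-edge-paths corresponding to the edges of the $G$-path. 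Under this dictionary two paths in $G$ share an edge if and only if the corresponding objects in $H$ share a $2$-edge-path, so edge-disjointness in $G$ translates exactly into semi-edge-disjointness in $H$. I would state this correspondence carefully at the outset, since both directions are then obtained by pushing objects across it.

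For the easy direction ($\Leftarrow$), suppose $H$ has $t$ distinct edges $e_1,\dots,e_t$ together with semi-edge-disjoint paths $P_{ij}$ between each pair. Because each $P_{ij}$ is a genuine path its edges are distinct, so its edge-sequence is a genuine path $Q_{ij}$ in $G$ joining the (distinct) vertices $e_i,e_j$; the map sending each $e_i$ to its vertex of $G$ is the required injection $\phi$. If two of the $Q_{ij}$ shared an edge of $G$ they would share a $2$-edge-path of $H$ --- and for genuine paths consecutive edges meet at a unique vertex, so this really is a common $2$-edge-path --- contradicting semi-edge-disjointness. Hence the $Q_{ij}$ are pairwise edge-disjoint and $G$ immerses $K_t$.

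For the harder direction ($\Rightarrow$), an immersion of $K_t$ in $G$ gives $t$ distinct terminals (edges of $H$) and pairwise edge-disjoint paths $Q_{ij}$ in $G$; via the dictionary these become semi-edge-disjoint walks in $H$ with the prescribed first and last edges. The one remaining task --- and the step I expect to be the main obstacle --- is to replace these walks by genuine paths without spoiling semi-edge-disjointness. The trouble is that when a walk revisits a vertex $v$ of $H$ and we delete the closed sub-walk there, we splice its incoming edge $g$ to its outgoing edge $g'$ and thereby manufacture a new $2$-edge-path $(g,v,g')$, which may coincide with one already used by another walk.

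The clean way to control this, which I would develop in detail, rests on the observation that a walk of $H$ coming from a $G$-path fails to be a path exactly when that $G$-path has a chord: a revisited vertex $w$ is shared by two non-consecutive edges $f_a,f_b$ of the $G$-path, and conversely any two non-consecutive edges of the $G$-path sharing a vertex force a revisit. Thus it suffices to realise the immersion by induced (chordless) paths in $G$, for then every associated walk is automatically a genuine path and semi-edge-disjointness transfers for free. I would therefore aim to show that an edge-disjoint system of $G$-paths can be taken chordless --- attempting a minimum-total-length choice and clearing any chord by the local shortcut/exchange at $w$. The delicate point is precisely the multi-path interaction: a chord of one path may be an edge of another, so a naive shortcut can break disjointness and a naive exchange is length-neutral and can cycle. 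Reconciling this (e.g.\ by a carefully chosen potential, or by arguing that a forced chord can always be routed around within the line-graph structure) is where the real work of the proof lies.
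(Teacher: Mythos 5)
Your dictionary between $L(H)$ and $H$, and your proof of the backward implication, are correct and coincide with the paper's (two-sentence) argument. The problem is the forward direction, which you explicitly leave unfinished: you reduce it to showing that the edge-disjoint path system in $G=L(H)$ can be rechosen so that every path is chordless, propose a shortcut/exchange procedure, and then concede that you cannot control the multi-path interaction (the shortcut edge may belong to another path, and the exchange is length-neutral and may cycle). As submitted, the statement is therefore not proved under your own reading of it.

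That said, the obstacle is largely one you have created by insisting that ``a path between edges $e$ and $f$ in $H$'' be a vertex-simple path of $H$. The paper treats the correspondence as \emph{exact}: a path of $G$ from vertex $e$ to vertex $f$ is, transcribed, the ``path'' between the edges $e$ and $f$ of $H$ (a sequence of distinct edges with consecutive members adjacent), and an edge of $G$ is the ``2-edge-path'' appearing in the semi-edge-disjointness condition; under that reading there is no walk-to-path conversion to perform and Observation \ref{sed} is essentially definitional. Your stricter reading is in fact untenable in this paper's setting: if $H$ consists of two vertices joined by three parallel edges, then $L(H)=K_3$ immerses $K_3$, yet there is no vertex-simple path of $H$ whose first and last edges are two parallel edges, so the forward implication would fail; and the paper itself later uses ``paths'' consisting of two parallel copies of an edge (in the proof of Theorem \ref{mH}). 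So rather than trying to complete the shortcut/exchange argument, you should either adopt the transcription-level reading --- under which your opening paragraph already constitutes the entire proof --- or else explicitly handle the degenerate parallel-edge case and state the observation for edge-sequences rather than vertex-simple paths. Your instinct that chordless paths in $L(H)$ correspond exactly to genuine paths in $H$ is a nice observation, but it is not needed here and should not be presented as the crux of the proof.
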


\begin{proof} By the alternate definition of immersion above, $G$ has a $K_t$-immersion iff it has a set of $t$ distinct terminal vertices and a path between each pair of these vertices, such that this set $\mathcal{P}_G$ of paths are edge-disjoint. Such a set of $t$ terminal vertices in $G$ corresponds exactly to $t$ distinct edges in $H$, and such a set $\mathcal{P}_G$ of paths in $G$ corresponds exactly to a path between each pair of these $t$ edges in $H$, such that this set of paths $\mathcal{P}_H$ in $H$ are semi-edge-disjoint.
\end{proof}

We shall refer to the $t$ distinct edges in Observation \ref{sed} as the \emph{terminals} (or \emph{terminal edges}) of the corresponding $K_t$-immersion.

Note that for any graph $H$, $\chi(L(H))=\chi'(H)$, where $\chi'(G)$ is the chromatic index of $G$ (the minimum number of colours needed to assign colours to the edges of $G$ so that adjacent edges receive different colours). Hence, using Observation \ref{sed}, we can restate Conjecture \ref{AKL} for line graphs as follows.

\begin{conjecture}\label{AKLLG} \emph{(The Abu-Khzam--Langston Conjecture for line graphs)} For any graph $H$, if $\chi'(H)\geq t$, then $H$ contains a set of $t$ distinct edges and a path between each pair of these edges such that this set of paths is semi-edge disjoint.
\end{conjecture}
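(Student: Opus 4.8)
The plan is to prove the statement directly by analysing \emph{why} $\chi'(H)$ is large, keeping throughout to the edge-coloured multigraph $H$ rather than to $L(H)$. By Observation~\ref{sed} the assertion is equivalent to saying that every line graph $G=L(H)$ with $\chi(G)=\chi'(H)\geq t$ immerses $K_t$, so I seek $t$ terminal edges of $H$ joined in pairs by a semi-edge-disjoint family of paths. Two results already in hand provide the scaffolding: Reed and Seymour \cite{reedseymour} settle every \emph{simple} $H$, and Theorem~\ref{mH} promotes a $K_t$-immersion in $L(H')$ to a $K_{mt}$-immersion in $L(mH')$. Combining them disposes of all \emph{uniform} multigraphs at once: if $H=mH'$ with $H'$ simple, then $L(H')$ immerses $K_{\chi'(H')}$, so $L(mH')$ immerses $K_{m\chi'(H')}$; since $\chi'(mH')\leq m\chi'(H')$, the conjecture then holds for $H$ with room to spare.

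The second step is to localise the high chromatic index. By the Goldberg--Seymour theorem, $\chi'(H)\geq t$ forces either $\Delta(H)\geq t-1$ or an odd set $S\subseteq V(H)$, $|S|=2s+1$, with $|E(H[S])|\geq (t-1)s+1$. If in fact $\Delta(H)\geq t$, a vertex $v$ of that degree already supplies $t$ mutually incident edges, and taking the length-two path through $v$ for each pair gives a semi-edge-disjoint family immediately (distinct edge-pairs at $v$ are distinct $2$-edge-paths), i.e.\ $K_t\subseteq L(H)$ as a subgraph. The delicate subcase of the degree alternative is $\Delta(H)=t-1$ with $\chi'(H)=t$ not explained by density: here $v$ yields only $t-1$ terminals and a $t$-th terminal must be reached through the critical structure without repeating an adjacency, which I would pursue by an edge-chromatic criticality argument local to $v$. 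Note that this subcase is already covered when $H$ is simple, so it is only genuinely new for non-simple $H$.

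The main obstacle, and the reason this conjecture is still open, is the density alternative with \emph{non-uniform} multiplicities. The uniform template shows that the target is sharp and achievable: an odd set carrying $\mu K_{2s+1}$ meets the density bound with $t=\mu(2s+1)$, and it is disposed of by the first step, since Reed--Seymour gives that $L(K_{2s+1})$ immerses $K_{2s+1}$ and Theorem~\ref{mH} then multiplies this to a $K_{\mu(2s+1)}$-immersion. Accordingly I would try to extract from a witnessing dense set $S$ a near-uniform blow-up of a simple graph---handled by the template---and then route the remaining irregular edges as further semi-edge-disjoint paths, drawing on the surplus guaranteed by $|E(H[S])|\geq (t-1)s+1$. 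I expect the hard part to be precisely that a dense multigraph need not contain a uniform blow-up of the right order: rounding the multiplicities down to a common value $\mu$ yields only a $K_{t'}$-immersion with $t'<t$, and recovering the last few terminals amounts to transferring Reed and Seymour's \emph{minor}-based branch-set argument to the \emph{immersion} (path) setting. Theorem~\ref{minorimm} warns that this transfer cannot be free once $t\geq5$, and it is exactly this gap that I do not expect the elementary reductions above to close.
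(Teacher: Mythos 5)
You set out to prove a statement that the paper itself presents as a \emph{conjecture}, not a theorem: the paper explicitly says that Conjecture~\ref{AKL} (equivalently, Conjecture~\ref{AKLLG}) ``is still open for line graphs of non-simple graphs,'' and it proves only special cases. So there is no paper proof to compare against; the only question is whether your argument closes the open case, and by your own admission it does not. The parts of your plan that work are exactly the paper's known territory: the simple case is Reed--Seymour \cite{reedseymour} (or Thomassen \cite{thomassen}), and your uniform-multigraph reduction --- $H=mH'$ with $H'$ simple, so $L(H')$ immerses $K_{\chi'(H')}$ and Theorem~\ref{mH} lifts this to a $K_{m\chi'(H')}$-immersion in $L(mH')$, while $\chi'(mH')\leq m\chi'(H')$ --- is precisely the paper's Corollary~1.3. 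Restating known reductions is fine as scaffolding, but it leaves the conjecture exactly where the paper leaves it.

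The two places where your sketch would need actual content are both left open. First, in the degree alternative with $\Delta(H)=t-1$ and $\chi'(H)=t$ for non-simple $H$, you defer to ``an edge-chromatic criticality argument local to $v$'' without supplying one; the paper's Figure~\ref{RScounter} shows concretely why the natural route through Theorem~\ref{ReedSey} breaks here: when two or more parallel edges join $v$ and $w$, the $t$ edge-disjoint paths to $u$ cannot in general be upgraded to a semi-edge-disjoint family, and in that example $L(H)$ has no $K_5$-immersion whatsoever (it is not a counterexample to the conjecture, since there $\chi'=\Delta=4$, but it kills the proof template you would be relying on). Second, the density alternative with non-uniform multiplicities --- which is the actual heart of the open problem --- is reduced only to the hope of ``extracting a near-uniform blow-up,'' and you concede that rounding multiplicities down loses terminals ($t'<t$) with no mechanism to recover them; transferring Reed and Seymour's branch-set argument to the path setting is exactly what the Section~2 discussion shows cannot be done wholesale. (You also invoke the Goldberg--Seymour theorem, a far heavier tool than anything in the paper, and even granting it the gap above remains.) In short: this is an honest and well-oriented research program, correctly mapping where the difficulty lies, but it is not a proof, and the statement remains a conjecture after it.
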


The analogous restatement of Hadwiger's Conjecture for line graphs is as follows.

\begin{theorem}\label{HLG} \emph{(Reed and Seymour \cite{reedseymour})} For any graph $H$, if $\chi'(H)\geq t$, then $H$ contains a set of $t$ connected subgraphs, each with at least one edge, that are pairwise edge-disjoint but pairwise have at least one vertex in common.
\end{theorem}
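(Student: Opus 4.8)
The plan is to obtain Theorem \ref{HLG} directly from the Reed--Seymour theorem that Hadwiger's Conjecture holds for all line graphs, by setting up a ``minor dictionary'' for line graphs analogous to the immersion dictionary of Observation \ref{sed}. Reed and Seymour proved that for any line graph $G$, $\chi(G)\geq t$ implies that $G$ has a $K_t$-minor. Since $\chi(L(H))=\chi'(H)$, the hypothesis $\chi'(H)\geq t$ is exactly the statement that $\chi(L(H))\geq t$, so their theorem furnishes a $K_t$-minor in the line graph $G=L(H)$. It then remains only to translate what such a minor says back in terms of $H$.

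Recall that a $K_t$-minor in $G$ is a collection of $t$ pairwise disjoint, nonempty branch sets $B_1,\dots,B_t\subseteq V(G)$, each inducing a connected subgraph of $G$, such that for every pair $i\neq j$ there is an edge of $G$ with one end in $B_i$ and the other in $B_j$. Since $V(G)=V(L(H))=E(H)$, each branch set $B_i$ is a set of edges of $H$; let $S_i$ be the subgraph of $H$ consisting of the edges in $B_i$ together with their endpoints. The central observation is that a set $S$ of edges of $H$ induces a connected subgraph of $L(H)$ if and only if $S$ forms a connected subgraph of $H$: a path in $L(H)$ between two edges of $S$ is precisely a sequence of edges of $S$ in which consecutive edges share an endpoint in $H$, which is exactly a walk in $H$ witnessing connectivity. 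Hence each $S_i$ is connected, and it has at least one edge because $B_i$ is nonempty.

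Finally I would read off the remaining two properties. The branch sets $B_1,\dots,B_t$ are pairwise disjoint as subsets of $E(H)$, so the subgraphs $S_1,\dots,S_t$ are pairwise edge-disjoint. An edge of $G=L(H)$ joining $B_i$ to $B_j$ is, by the definition of the line graph, a pair of edges $e\in B_i$ and $f\in B_j$ that are adjacent in $H$, i.e.\ that share an endpoint; thus $S_i$ and $S_j$ have a vertex in common. This yields exactly the $t$ connected, pairwise edge-disjoint, pairwise vertex-meeting subgraphs claimed in the statement. The only step requiring genuine care is the connectivity equivalence between $L(H)[S]$ and the edge-induced subgraph of $H$ on $S$; everything else is bookkeeping across the line-graph correspondence, mirroring the semi-edge-disjoint reformulation used for immersions in Observation \ref{sed}.
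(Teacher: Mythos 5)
Your proof is correct, and it rests on the same external pillar as the paper's (Reed and Seymour's work on Hadwiger's conjecture for line graphs), but it reaches the statement by a slightly different route. You invoke the headline result --- $\chi(L(H))\geq t$ implies $L(H)$ has a $K_t$-minor --- and then translate the $K_t$-minor back to $H$ via the branch-set dictionary: branch sets in $V(L(H))=E(H)$ become edge-disjoint connected subgraphs of $H$, and edges of $L(H)$ between branch sets become shared vertices. Your connectivity equivalence between $L(H)[B]$ and the edge-induced subgraph of $H$ on $B$ is the right point to be careful about, and it holds in both directions. The paper instead treats Theorem \ref{HLG} as the \emph{definitionally} equivalent restatement of Hadwiger for line graphs (it uses exactly your dictionary, but leaves it implicit, later recalling it in Section 3), and then justifies it concretely in two pieces: the case $\chi'(H)=\Delta(H)$ via a star at a maximum-degree vertex, and the case $\chi'(H)>\Delta(H)$ by reading the $t$ subgraphs directly off the explicit $u,v,w$ structure of Theorem \ref{ReedSey}. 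What the paper's more hands-on derivation buys is the setup for its subsequent discussion of why the very same Reed--Seymour structure fails to yield semi-edge-disjoint paths (hence a $K_t$-immersion) when $H$ is non-simple; what your version buys is a cleaner, self-contained reduction that needs only the black-box minor statement and no case analysis.
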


We remark that Conjecture \ref{AKLLG} and Theorem \ref{HLG} obviously hold whenever $\chi'(H)=\Delta(H)$. This is because $H$ necessarily contains a star with $\Delta(H)$ edges, the collection of which fit the criteria for both the set of terminal edges needed in Conjecture \ref{AKLLG} and the set of subgraphs needed for Theorem \ref{HLG} (in fact these edges correspond exactly to a $K_{\Delta(H)}$ in $L(H)$). When $H$ is a simple graph, there is also a straightforward (and long-known) argument to show that the result of Theorem \ref{HLG} follows from Vizing's Theorem \cite{vizing}, and we include it here for completeness.

\begin{lemma} Let $H$ be a simple graph with $\chi'(H)\geq t$. Then $H$ contains a set of $t$ connected subgraphs, each with at least one edge, that are pairwise edge-disjoint but pairwise have at least one vertex in common.
\end{lemma}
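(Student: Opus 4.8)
The plan is to deduce everything from Vizing's Theorem \cite{vizing}, which for a simple graph $H$ gives $\chi'(H)\leq\Delta(H)+1$; hence $\chi'(H)\geq t$ immediately yields $\Delta(H)\geq t-1$. If in fact $\Delta(H)\geq t$, then a vertex of degree at least $t$ is incident with $t$ edges, and these $t$ single-edge subgraphs are pairwise edge-disjoint and pairwise share that vertex --- exactly the configuration sought. (This is the case already noted in the remark following Theorem \ref{HLG}.) So the whole difficulty lies in the remaining case $\Delta(H)=t-1$, where $\chi'(H)=\Delta(H)+1$ and $H$ is therefore class $2$.

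In this case I would first pass to an edge-minimal subgraph $H'\subseteq H$ with $\chi'(H')=t$; such an $H'$ is connected and edge-critical, meaning $\chi'(H'-e)\leq t-1$ for every edge $e$, and it satisfies $\Delta(H')=t-1$ (by Vizing $\Delta(H')\geq t-1$, while $\Delta(H')\leq\Delta(H)=t-1$). Thus $H'$ is a critical class-$2$ graph. The construction I have in mind is then to fix a vertex $v$ of degree $t-1$ with neighbours $u_1,\dots,u_{t-1}$, use the $t-1$ star edges $vu_1,\dots,vu_{t-1}$ as $t-1$ of the subgraphs, and take for the $t$-th subgraph a connected subgraph $T$ of $H'-v$ containing all of $u_1,\dots,u_{t-1}$. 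Any such $T$ is automatically edge-disjoint from the star (it uses no edge at $v$), it has an edge since $t-1\geq 2$, and it meets each $vu_i$ at $u_i$; together with the star edges, which pairwise share $v$, this is precisely the required family of $t$ subgraphs.

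For this to work I need all neighbours of $v$ to lie in a single component of $H'-v$, i.e.\ $v$ must not be a cut vertex; the \emph{main obstacle} is therefore to show that a critical class-$2$ graph has no cut vertex. I would prove this by a standard palette-swapping argument: if $v$ were a cut vertex splitting $H'$ into $H_1$ and $H_2$ meeting only at $v$, then each $H_i$ is a proper subgraph of the critical graph $H'$ and so admits a proper $\Delta$-edge-colouring with $\Delta=t-1$ colours; since $\deg_{H_1}(v)+\deg_{H_2}(v)=\deg_{H'}(v)\leq\Delta$, a global permutation of the colours on $H_2$ makes the palettes at $v$ disjoint, and gluing the two colourings yields a proper $\Delta$-edge-colouring of $H'$, contradicting $\chi'(H')=\Delta+1$. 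Hence $H'-v$ is connected, $T$ exists, and the construction goes through. The small cases $t\leq 2$ fall under $\Delta(H)\geq t$ (if $\Delta(H)=t-1\leq 1$ then $\chi'(H)\leq t-1$, a contradiction), so no separate treatment is needed there beyond a remark.
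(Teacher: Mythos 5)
Your proposal is correct and follows essentially the same route as the paper: reduce via Vizing's Theorem to the case $t=\Delta(H)+1$, pass to an edge-minimal (critical) subgraph, rule out a cut vertex at a maximum-degree vertex by permuting colours on one shore, and then take the $\Delta(H)$ star edges at that vertex together with one connected subgraph covering its neighbours. The only cosmetic difference is that you isolate the cut-vertex step as a standalone fact about critical class-2 graphs, whereas the paper folds the same palette-swapping argument into a minimal-counterexample induction.
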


\begin{proof} From above, we need only consider $t\geq \Delta(H)+1$. Since $H$ is simple, Vizing's Theorem tells us that $\chi'(H)\leq \Delta(H)+1$, so in fact we need only consider $t=\Delta(H)+1$.
Suppose that $H$ is an edge-minimal counterexample. Then, in particular, $H$ is connected. Let $v$ be a vertex of degree $\Delta(H)$ in $H$. Suppose first that $v$ is a cut vertex. If each shore of the cut (including $v$) can be individually edge-coloured with $\Delta(H)$ colours, then so can $H$ (by possibly permuting colours around $v$). So, one of these smaller graphs has chromatic index at least $\Delta(H)+1$. Since this smaller graph is not a counterexample, it has the desired set of subgraphs, which are also subgraphs of $H$. So, we may assume that $v$ is not a cut vertex. Consider the $\Delta(H)+1$ subgraphs induced by: the $\Delta(H)$ edges incident to $v$, and all other edges in the graph. Since $v$ is not a cut vertex, these subgraphs satisfy the needed conditions.
\end{proof}

There does not seem to be an easy analog of the above argument to show that Conjecture \ref{AKLLG} holds whenever $H$ is simple. Instead, we must rely on the afore-mentioned work of Reed and Seymour \cite{reedseymour} or Thomassen \cite{thomassen}. While Thomassen's work depends on $H$ being simple, it is worth taking a closer look at exactly what is and what is not implied by \cite{reedseymour}. In \cite{reedseymour}, Reed and Seymour proved the following result.

\begin{theorem}\label{ReedSey}\emph{(Reed and Seymour, \cite{reedseymour})} For any graph $H$ with chromatic index $t > \Delta(H)$, there exist vertices $u,v,w$ such that the number of edges between $v$ and $w$ plus the number of edge-disjoint paths between $u$ and $\{v,w\}$ is at least $t$.
\end{theorem}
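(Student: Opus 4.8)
The plan is to reduce to an edge-critical graph and then mine its structure using Kempe chains. First I would pass to a subgraph $H'\subseteq H$ that is \emph{$t$-edge-critical}, meaning $\chi'(H')=t$ but $\chi'(H'-e)=t-1$ for every edge $e$; such an $H'$ exists since we may delete edges one at a time while the chromatic index stays $t$, stopping when no further edge can be removed. Deleting edges cannot raise the maximum degree, so we still have $\Delta(H')\le \Delta(H)<t$. Because any three vertices of $H'$, any parallel edges of $H'$, and any edge-disjoint family of paths in $H'$ are also present in $H$, it suffices to prove the statement for $H'$; so from now on I assume $H$ itself is $t$-edge-critical with $\Delta(H)\le t-1$.

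Next I would extract edge-disjoint connectivity from the criticality. Fix any edge $e=vw$ and a $(t-1)$-edge-colouring $\phi$ of $H-e$. Let $M_v$ and $M_w$ be the sets of colours missing at $v$ and at $w$; then $|M_v|=t-\deg(v)\ge 1$ and $|M_w|=t-\deg(w)\ge 1$, and $M_v\cap M_w=\varnothing$, since a common missing colour could be placed on $e$, contradicting $\chi'(H)=t$. For $\alpha\in M_v$ and $\beta\in M_w$, the $\alpha\beta$-Kempe chain through $w$ must terminate at $v$: otherwise swapping $\alpha$ and $\beta$ along it would free $\alpha$ at $w$ while leaving $\alpha$ free at $v$, again allowing $e$ to be coloured. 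Thus each such pair yields an $\alpha\beta$-alternating $v$--$w$ path, and pairs built from disjoint colour classes give \emph{edge-disjoint} $v$--$w$ paths. This manufactures a guaranteed amount of edge-disjoint connectivity between the two ends of a critical edge, controlled by the deficiencies $t-\deg(v)$ and $t-\deg(w)$.

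I would then convert this into the desired three-vertex inequality via Menger's theorem together with an extremal choice of the triple. The parallel edges joining $v$ and $w$ are meant to account for the first term in the bound, while the alternating Kempe paths, reorganised into a maximum edge-disjoint family, account for the second, with $u$ chosen as a vertex through which this flow must pass, equivalently (by Menger) as the source of a minimum cut separating it from $\{v,w\}$. By selecting $v,w$ to maximise the number of parallel edges between them and tracking how the deficiencies $t-\deg(\cdot)$ combine across the Kempe chains produced for different critical edges, one aims to pin down a triple $u,v,w$ for which the parallel-edge count plus the edge-disjoint path count reaches $t$.

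The hard part will be this last combining step: showing that the locally produced alternating paths can be \emph{simultaneously} realised as edge-disjoint paths from a single vertex $u$ to $\{v,w\}$, and that their number, added to the multiplicity of $vw$, is at least $t$ rather than merely some smaller quantity such as $1+\min\{t-\deg(v),\,t-\deg(w)\}$. The boundary case $t=\Delta(H)+1$ is the genuine difficulty, since then $H$ need not contain any dense or overfull subgraph (think of Petersen-type graphs), so the entire weight of the bound must come from the Kempe-chain connectivity rather than from edge multiplicity. An alternative and possibly cleaner route here is to argue through the fractional edge-colouring relaxation and its LP duality, where the separating cut furnished by Menger's theorem appears directly as the dual certificate.
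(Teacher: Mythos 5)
The paper does not actually prove this statement -- it is imported verbatim from Reed and Seymour \cite{reedseymour} -- so there is no internal proof to compare against; what can be assessed is whether your sketch would stand on its own, and it does not. The reduction to a $t$-edge-critical graph and the standard Vizing--Kempe facts (for a critical edge $e=vw$, the missing-colour sets $M_v$ and $M_w$ are disjoint, and each $\alpha\beta$-chain with $\alpha\in M_v$, $\beta\in M_w$ runs from $w$ to $v$) are correct, but they deliver the wrong object and the wrong quantity. Two pairs $(\alpha,\beta)$ and $(\alpha,\beta')$ share the colour $\alpha$, so their chains need not be edge-disjoint; insisting on disjoint colour pairs, as you do, yields only $\min\{|M_v|,|M_w|\}=\min\{t-\deg(v),\,t-\deg(w)\}$ edge-disjoint paths, which can be as small as $1$, whereas the theorem needs roughly $t-\mu(v,w)$ of them. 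Moreover these are $v$--$w$ paths, not paths from a third vertex $u$ to the set $\{v,w\}$, which is the structure the statement actually demands; nothing in the sketch identifies $u$ or produces any flow out of it.

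The third paragraph, where the theorem would actually have to be proved, contains no argument: ``one aims to pin down a triple'' and the appeal to Menger's theorem are placeholders, and you concede as much in the final paragraph. Reed and Seymour's proof is an induction on the number of edges with a carefully organised recolouring argument that converts the failure of a $(t-1)$-edge-colouring into the required triple; none of that machinery is present here. As written, the proposal is a reasonable opening move plus an honest description of the difficulty, but the difficulty is the theorem, so this must be counted as having a genuine gap rather than being a proof by an alternative route.
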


The above theorem does not apply to graphs $H$ with $\chi'(H)=\Delta(H)$, but as we saw above this is irrelevant for both Conjecture \ref{AKLLG} and Theorem \ref{HLG}. The paths provided between the vertices $u, v, w$ clearly do provide the $t$ subgraphs needed to establish Theorem \ref{HLG}. If $H$ is simple, then Theorem \ref{ReedSey} also works to establish Conjecture \ref{AKLLG}. To see this, note that when $H$ is simple there is at most one edge between $v$ and $w$ (say $vw$), and since we are assuming $\chi'(H)=\Delta(H)+1=t$, this means there are exactly $\Delta(H)$ edge-disjoint paths between $u$ and
$\{v,w\}$. By taking the first edge in each of these $\Delta(H)$ paths, along with the edge $vw$, we get a set of $t$ distinct edges and a path between each pair of these edges such that this set of paths is semi-edge disjoint. On the other hand however, if $H$ is not simple, then the $t$ paths guaranteed by Theorem \ref{ReedSey} may include two or more edges between $v$ and $w$. In such a case, one of the edge-disjoint paths to $u$ may need to be used by both of these edges in order to reach some edge incident to $u$. See for example Figure \ref{RScounter}, where the 3 edge-disjoint paths between $u$ and $\{v,w\}$ are not enough to provide a set of semi-edge-disjoint paths between all of the five darkened  edges. The problem is not our selection of the 5 terminal edges. Indeed, the line graph of the graph pictured in Figure \ref{RScounter} has no $K_5$-immersion at all. One of the easiest ways to see this is to use the following basic observation about immersions.

\begin{obs} If $G$ has a $K_t$-immersion, then it has at least $t$ vertices with degree at least $t-1$.
\end{obs}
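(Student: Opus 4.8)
The plan is to work directly from the path-based definition of immersion given at the start of Section~2. Suppose $G$ has a $K_t$-immersion, and fix the associated $1$-$1$ map $\phi:V(K_t)\to V(G)$ together with the family $\{P_{uv}: uv\in E(K_t)\}$ of pairwise edge-disjoint paths, where $P_{uv}$ joins $\phi(u)$ to $\phi(v)$. The goal is to show that each of the $t$ terminals $\phi(v)$ already has degree at least $t-1$ in $G$; since $\phi$ is $1$-$1$ these terminals are $t$ distinct vertices, which immediately gives the conclusion.

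First I would fix a single terminal $\phi(v)$ and count the edges of $G$ incident to it. In $K_t$ the vertex $v$ has degree $t-1$, so there are exactly $t-1$ paths of the family, namely $P_{vw}$ as $w$ ranges over the $t-1$ vertices of $K_t$ other than $v$, each having $\phi(v)$ as an endpoint. Because $w\neq v$ forces $\phi(w)\neq\phi(v)$, every such path has at least one edge, so it has a well-defined first edge incident to $\phi(v)$.

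The key step is then to observe that these $t-1$ first edges are pairwise distinct. This is immediate from the hypothesis that the paths $P_{vw}$ are pairwise edge-disjoint: two distinct paths in the family share no edge at all, so in particular their first edges at $\phi(v)$ differ. Hence $\phi(v)$ is incident to at least $t-1$ distinct edges of $G$, i.e.\ $\deg_G(\phi(v))\geq t-1$. Applying this to each of the $t$ terminals yields $t$ vertices of degree at least $t-1$.

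This argument is entirely a counting observation, so there is no real obstacle; the only point requiring a moment's care is that each path in the family is nondegenerate (has a genuine first edge at its starting terminal), which holds because it joins two distinct terminal vertices. I would therefore expect the write-up to be short, with the edge-disjointness of the immersion paths doing all the work.
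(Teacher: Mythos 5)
Your argument is correct. The paper disposes of this observation in one line using the operational definition of immersion: splitting off a pair of edges at $v$ lowers $\deg(v)$ by two and leaves all other degrees unchanged, so degrees never increase under the operation, and the $t$ vertices of degree $t-1$ in the resulting $K_t$ must already have degree at least $t-1$ in (a subgraph of, hence in) $G$. You instead work from the equivalent path-based definition stated at the start of Section~2: each terminal $\phi(v)$ is an endpoint of the $t-1$ pairwise edge-disjoint paths $P_{vw}$, each nondegenerate since $\phi$ is injective, and their first edges at $\phi(v)$ are distinct by edge-disjointness, giving $\deg_G(\phi(v))\geq t-1$. The two arguments are really dual views of the same counting fact --- the surviving vertices of the split-off process are exactly the terminals, and the distinct first edges are exactly the degree that survives --- so neither buys anything substantive over the other; the paper's version is shorter because it leans on monotonicity of degree under the operation, while yours is more self-contained for a reader who only has the path characterization in hand. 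Your care about nondegeneracy of the paths (each $P_{vw}$ has a genuine first edge because $\phi(v)\neq\phi(w)$) is the one point the paper's phrasing silently absorbs.
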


Since the immersion operation of splitting off edges never increases the degree of a vertex, the above observation is immediate. In terms of Figure \ref{RScounter}, we see that this graph $H$ has only 4 edges whose \emph{edge-degree} (number of other edges they are adjacent to) is at least 4; only the 4 edges incident to $v$ have this property. Hence, the line graph of $H$ cannot have a $K_5$-immersion.

It is worth noting that the graph $H$ in Figure \ref{RScounter} can be easily generalized from a counterexample for $t=5$ to any other larger value of $t$ by simply adding more 2-edge-paths between $u$ and $v$.\\

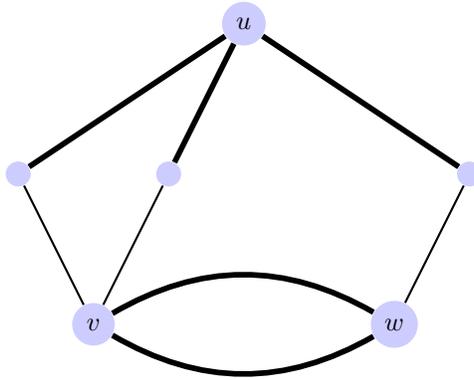
\begin{figure}
\centering
\begin{tikzpicture}

  \node[vertex] (v1) at (3,4) {$u$};
  \node[vertex] (v2) at (0,2)  {};
  \node[vertex] (v3) at (2,2)  {};
  \node[vertex] (v5) at (6,2)  {};
  \node[vertex] (v6) at (1,0) {$v$};
  \node[vertex] (v7) at (5,0)  {$w$};

  \foreach \from/\to in {v2/v6, v3/v6, v5/v7}
	\path[edge] (\from) -- (\to);

  \foreach \from/\to in {v1/v2,v1/v3,v1/v5}
	\path[thickedge] (\from) -- (\to);

\path[thickedge] (v6) to [bend right = 30] (v7);
\path[thickedge] (v6) to [bend left = 30] (v7);

\end{tikzpicture}
\caption{A graph $H$ where $L(H)$ contains $K_5$ as a minor but not as an immersion.}
\label{RScounter}
\end{figure}

\section{Graphs with constant edge multiplicity}

Before we prove Theorem \ref{mH}, it will be useful to have the following lemma.

\begin{lemma}\label{concat}
Let $\mathcal{P} = \{P_1,P_2,...,P_k\}$ and $\mathcal{Q} = \{Q_1,Q_2,...,Q_k\}$ be two semi-edge-disjoint sets of paths. Suppose that the last edge in $P_i$  is the first edge in $Q_i$, and that for all $i,j$, $|E(P_i) \cap E(Q_j)| \leq 1$. Then the set of paths formed by concatenating $P_i$ and $Q_i$ for each $i$ (leaving out the duplicate edge) is semi-edge-disjoint.
\end{lemma}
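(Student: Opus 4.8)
The plan is to verify the defining condition of semi-edge-disjointness directly for the concatenated family. Write $R_i$ for the path obtained by concatenating $P_i$ and $Q_i$ and deleting the one duplicated edge. The crucial first observation is that concatenation introduces no new adjacencies. If the edges of $P_i$ in order are $a_1,\dots,a_{p-1},a_p$ and those of $Q_i$ are $b_1,\dots,b_q$, with $a_p=b_1$ the shared edge, then the consecutive edge-pairs of $R_i$ are exactly the pairs $(a_s,a_{s+1})$ coming from $P_i$ together with the pairs $(b_s,b_{s+1})$ coming from $Q_i$. Since $a_p=b_1$ sits strictly between $a_{p-1}$ and $b_2$, no pair of the form $(a_{p-1},b_2)$ is created. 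Hence every $2$-edge-path that is a subpath of $R_i$ is a subpath of $P_i$ or a subpath of $Q_i$.

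With this in hand, I would argue by contradiction. Suppose some $2$-edge-path $P_0$, with edge set $\{e_1,e_2\}$, is a subpath of two distinct concatenated paths $R_i$ and $R_j$ (so $i\neq j$). By the observation above, $P_0$ is a subpath of $P_i$ or $Q_i$, and likewise a subpath of $P_j$ or $Q_j$. Up to symmetry this leaves three cases.

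If $P_0$ is a subpath of $P_i$ and of $P_j$, this contradicts that $\mathcal{P}$ is semi-edge-disjoint; if it is a subpath of $Q_i$ and of $Q_j$, it contradicts that $\mathcal{Q}$ is semi-edge-disjoint. In the remaining mixed case, $P_0$ is a subpath of $P_i$ and of $Q_j$ (or of $Q_i$ and of $P_j$, which is symmetric), so both $e_1$ and $e_2$ lie in $E(P_i)\cap E(Q_j)$, giving $|E(P_i)\cap E(Q_j)|\geq 2$ and contradicting the hypothesis $|E(P_i)\cap E(Q_j)|\leq 1$. As every case is impossible, no $2$-edge-path lies in two distinct $R_i$, and the family is semi-edge-disjoint.

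The argument is short, and the only subtle point—the step I would be most careful about—is the bookkeeping at the junction: one must confirm that gluing $P_i$ to $Q_i$ along their shared edge creates no fresh consecutive edge-pair. This is precisely why the duplicated edge is deleted rather than traversed twice, and it is also what makes the intersection hypothesis of size at most one exactly the right assumption, since a repeated $2$-edge-path across a $P$ and a $Q$ would force two common edges.
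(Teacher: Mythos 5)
Your proof is correct and follows essentially the same route as the paper's: both argue that concatenation creates no new edge-adjacencies, so every $2$-edge-path in a concatenated path already lies in some $P_i$ or $Q_i$, and then dispatch the resulting cases using the semi-edge-disjointness of $\mathcal{P}$ and $\mathcal{Q}$ together with the hypothesis $|E(P_i)\cap E(Q_j)|\leq 1$. Your write-up merely makes the junction bookkeeping and the three-case analysis more explicit than the paper does.
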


\begin{proof} Let $S_i$ be the path obtained by concatenating $P_i$ and $Q_i$ (leaving out the duplicate edge). We wish to show that $\mathcal{S} = \{S_i : 1 \leq i \leq k\}$ is a semi-edge-disjoint set of paths. We observe that for each $i$, every edge-adjacency appearing in $S_i$ appears in exactly one of $P_i$ or $Q_i$. Similarly, each pair $P_i, Q_j$ share at most one common edge, so no edge-adjacency appears in both $P_i$ and $Q_j$. Since $\mathcal{P}$ and $\mathcal{Q}$ are both semi-edge-disjoint, it thus follows that $S$ is a set of semi-edge-disjoint paths.
\end{proof}

We now prove the main result of this section.

\begin{proof}\emph{(Theorem \ref{mH})} Let $H$ be a graph such that $G=L(H)$ has a $K_t$-immersion. We must show that $L(mH)$ has a $K_{mt}$-immersion.

Since $L(H)$ immerses $K_t$, it follows that there \mg{exist} $t$ terminal edges in $H$ which are joined by a set $\mathcal{P}$ of semi-edge-disjoint paths joining each pair (hereafter we abbreviate semi-edge-disjoint as s.e.d.). Denote this set of edges by $S$. We note now that in $mH$, it would be sufficient to find $mt$ terminal edges and a corresponding s.e.d. set of paths (with a path between each pair). Denote by $S_m$ the set of edges in $mG$ formed by choosing all $m$ copies of $e$ for each $e \in S$. We claim that $S_m$ is a suitable set of terminal edges in $mG$.

Let $e \neq f \in S$, and let $P(e,f)\in \mathcal{P}$ be the path between them with edges $e = e_1, e_2, ..., e_{\ell} = f$. Each of $e, f$ give rise to $m$ edges in $S_m$. Denote these edges by $e^1, e^2, ..., e^m$ and $f^1, f^2, ..., f^m$ respectively. We will now define a set of s.e.d. paths joining each $e^i$ and $f^j$. If $e$ and $f$ are adjacent in $G$ (i.e. $\ell = 2$), then $e^i$ and $f^j$ are adjacent for all $1 \leq i, j \leq k$ and we are done. Thus, we may assume that $P(e,f)$ contains at least three edges, i.e. $\ell \geq 3$. For each $1 < i < \ell$, we denote the $m$ edges in $mG$ corresponding to $e_i$ by $e_{i}^1, e_{i}^2,...,e_{i}^m$. We observe that if $\ell$ is even, then for each $1 \leq i, j \leq k$, we can choose the path with edges $e^i = e_1^i,e_{2}^j,e_{3}^i,e_{4}^j,...,e_{\ell-1}^i,e_{\ell}^j = f^j$ to be the path connecting $e^i$ and $f^j$. As a set, these paths are s.e.d., since adjacencies between ``level $i$'' and ``level $j$'' of the edge-multiplicities occurs only in the path between $e^i$ and $f^j$. Hence we may assume that $\ell$ is odd.

Let $L$ be a latin square of order $m$.  For each $1 \leq i,j \leq k$, choose the path between $e^i$ and $f^j$ formed by the edges $e^i = e_1^i, e_2^j, e_3^i, e_4^j,..., e_{\ell-2}^i, e_{\ell-1}^{L(i,j)},e_{\ell}^j = f^j$; call this set of paths $\mathcal{Q}$.  If we consider the set of paths of the form $e_1^i, e_2^j, e_3^i, e_4^j,...,e_{\ell-2}^i$, this set is s.e.d., since as above, adjacencies between ``level $i$'' and ``level $j$'' of the edge-multiplicities occur only in one of the paths. We can also observe that the set of 2-edge-paths of the form $e_{\ell-2}^i, e_{\ell-1}^{L(i,j)}$ are also s.e.d., as are the set of 2-edge-paths of the form $e_{\ell-1}^{L(i,j)}, e_{\ell}^j$. The first set of 2-edge-paths are s.e.d. because of the row condition of latin squares: note that the $i, j$ 2-edge-path is the only one containing an adjacency between level $i$ of $e_{\ell-2}$ and level $L(i,j)$ of $e_{\ell-1}$, because $L(i,j)$ occurs only once in row $i$ of $L$. Similarly, the second set of 2-edge-paths are s.e.d. because of the column condition of latin squares. By applying Lemma \ref{concat} twice, we see that $\mathcal{Q}$ is a set of s.e.d. paths.

We have now found a set of s.e.d. paths between each of $\{e^1,...,e^m\}$ and $\{f^1,...,f^m\}$. We may repeat this procedure to find a set of  s.e.d. paths between any pair of terminal edges in $S_m$ that correspond to distinct terminal edges in $S$. Given a single terminal edge in $S$, there is a trivial set of s.e.d. paths joining all the corresponding terminal edges in $S_m$, with each path just consisting of the two edges in question. It is left to observe that, taken as a whole, this entire set of paths is s.e.d. Suppose instead that there exists a pair of edges in $mH$ appearing consecutively in two paths. Suppose that these paths are those connecting $a^i$ to $b^j$ and $c^k$ to $d^w$ for some $a, b, c, d\in S$, with $a, b, c, d$ not necessarily distinct but with $\{a,b\} \neq \{c,d\}$ (as we have ensured the two paths are s.e.d. otherwise). Since we obtained these paths by using the underlying paths $\mathcal{P}$ between edges in $S$, a pair of edges appearing consecutively here would correspond to a pair of edges in $H$ appearing consecutively in two different paths from $\mathcal{P}$, both the path from $a$ to $b$ and the path from $c$ to $d$. If $a=b$ or $c=d$ such paths do not even exist (we did not include paths from a terminal edge to itself in $\mathcal{P}$), and hence this is impossible. Otherwise, we get a contradiction to the fact that $\mathcal{P}$ is a set of s.e.d. paths.
\end{proof}

We now note that a minor analog of Theorem \ref{mH} is trivial. That is, if $L(H)$ contains $K_t$ as a minor, then $L(mH)$ trivially contains $K_{mt}$ as a minor. This is because the fact that $L(H)$ contains $K_t$ as a minor means, as we saw in the last section, that $H$ contains $t$ connected subgraphs $A_1, A_2, \ldots, A_t$, each with at least one edge, that are pairwise edge-disjoint but pairwise have at least one vertex in common. This extends to a selection of $mt$ connected subgraphs in $mH$ with the same properties simply by selecting the $m$ copies of $A_i$ for each $1 \leq i \leq t$.\\

\section{Clique immersions vs clique minors in line graphs}

We first prove the following helpful lemma.

\begin{lemma}\label{twocycles}
Suppose that $H$ contains two cycles, at most one of which has length two, and suppose that these two cycles share at least one common edge. Then $L(H)$ has both a $K_4$-immersion and a $K_4$-minor.
\end{lemma}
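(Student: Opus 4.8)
The plan is to reduce the hypothesis to a single clean substructure—a \emph{theta subgraph}, i.e.\ two branch vertices joined by three internally disjoint paths—and then to produce the required immersion and minor inside it. Since a theta subgraph of $H$ persists in $H$, and since both the immersion criterion of Observation~\ref{sed} and the minor criterion used in Section~3 (a $K_t$-minor of $L(H)$ corresponds to $t$ pairwise edge-disjoint connected subgraphs of $H$, each with an edge, that pairwise share a vertex) are inherited from subgraphs, it suffices to exhibit a $K_4$-immersion and a $K_4$-minor in the theta alone.

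First I would extract the theta. Let $e=xy$ be an edge common to $C_1,C_2$. A cycle has length at least two, so the hypothesis that at most one cycle has length two forces one of them, say $C_1$, to have length at least three. Put $P=C_1-e$ and $Q=C_2-e$; both are $x$--$y$ paths, $Q\neq P$ since $C_1\neq C_2$, and $Q$ must use an edge off $C_1$ (an $x$--$y$ path inside the cycle $C_1$ can only be $P$ or the single edge $e$). Following $Q$ from $x$, let $a$ be the tail of the first edge of $Q$ that leaves $C_1$ and $b$ the vertex where this excursion first returns to $C_1$; then $Q[a,b]$ is internally disjoint from $C_1$ and meets it only in $a,b$. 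The two arcs of $C_1$ between $a$ and $b$, together with $Q[a,b]$, form a theta $\Theta$. Because these two arcs comprise all of $C_1$, whose length is at least three, one of them has length at least two. Relabeling, $\Theta$ has branch vertices $u,v$ and internally disjoint $u$--$v$ paths $P_1,P_2,P_3$ with $|P_3|\geq 2$.

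For the immersion I would apply Observation~\ref{sed}, taking as terminal edges the three edges $p_1,p_2,p_3$ incident to $u$ (the first edges of $P_1,P_2,P_3$) together with the last edge $p_3'$ of $P_3$ at $v$; these are four distinct edges precisely because $|P_3|\geq 2$. The six connecting paths are: the three $2$-edge paths through $u$ among $p_1,p_2,p_3$; the subpath of $P_3$ from $p_3$ to $p_3'$; and, for the remaining two pairs, the path starting with $p_1$ (resp.\ $p_2$), traversing $P_1$ (resp.\ $P_2$) to $v$, and ending with $p_3'$. One then checks that this family is semi-edge-disjoint: edge-adjacencies occur only at $u$, at $v$, or internally to some $P_i$; at $u$ the adjacencies are the three pairs among $p_1,p_2,p_3$ plus at most the pairs of $p_1,p_2$ with their successors on $P_1,P_2$, at $v$ they are the two pairs of $p_3'$ with the last edges of $P_1,P_2$, and each internal adjacency of a $P_i$ is traversed by only one of the six paths—so no adjacency repeats. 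This verification can be organized through Lemma~\ref{concat}.

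For the minor I would use the subgraph criterion: letting $w$ be the first internal vertex of $P_3$, take the four connected subgraphs $P_1$, $P_2$, the edge $uw$, and the remainder of $P_3$ from $w$ to $v$. They are pairwise edge-disjoint, and each pair shares a vertex ($P_1,P_2$ at $u$; each $P_i$ meets the two pieces of $P_3$ at $u$ or $v$; the two pieces of $P_3$ at $w$), yielding the $K_4$-minor. The main obstacle is the first step: one must confirm that ``at most one cycle of length two'' is exactly what excludes the degenerate case in which $C_1\cup C_2$ is a triple edge, whose line graph is merely $K_3$ and has neither a $K_4$-immersion nor a $K_4$-minor. Carrying this through the extraction—guaranteeing the theta really has a path of length at least two, including when one of the cycles is a digon parallel to $e$—is the delicate point; once a theta with $|P_3|\geq 2$ is secured, the two constructions and their checks are routine.
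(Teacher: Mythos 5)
Your proof is correct and follows essentially the same route as the paper's: both arguments reduce the two overlapping cycles to a theta configuration with one branch path of length at least two, take as terminals the three edges at one branch vertex plus the far edge of a long path, route the semi-edge-disjoint paths around the theta, and exhibit the $K_4$-minor via four edge-disjoint connected subgraphs pairwise sharing a vertex. Your extraction of the theta and your choice of a fourth subgraph contained inside it are slightly more explicit than the paper's, but the underlying construction is the same.
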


\begin{figure}
\centering
\begin{tikzpicture}

  \node[vertex] (v1) at (2,3) {};
  \node[vertex] (v2) at (0,3)  {};
  \node[vertex] (v3) at (0,0)  {};
  \node[vertex] (v4) at (2,0) {};
  \node[vertex] (v5) at (4,0)  {};
  \node[vertex] (v6) at (4,3) {};
  \node[vertex] (v7) at (2,1.5)  {};

  \node[label] (l1) at (2,0.75) {$\vdots$};
  \node[label] (l2) at (0,0.75) {$\vdots$};
  \node[label] (l3) at (0,1.5) {$\vdots$};
  \node[label] (l4) at (0,2.25) {$\vdots$};
  \node[label] (l5) at (4,0.75) {$\vdots$};
  \node[label] (l6) at (4,1.5) {$\vdots$};
  \node[label] (l7) at (4,2.25) {$\vdots$};

  \foreach \from/\to in {v1/v2,v3/v4,v4/v5,v1/v6,v1/v7}
	\path[edge] (\from) -- (\to);

  \node[label] (e) at (2,2.25) {$e$};
  \node[label] (e1) at (1,3) {$e_1$};
  \node[label] (e2) at (3,3) {$e_2$};
  \node[label] (f1) at (1,0) {$f_1$};
  \node[label] (f2) at (3,0) {$f_2$};

\end{tikzpicture}
\caption{Two cycles sharing at least one edge, as in the proof of Lemma \ref{twocycles}}
\label{twocyclespic}
\end{figure}
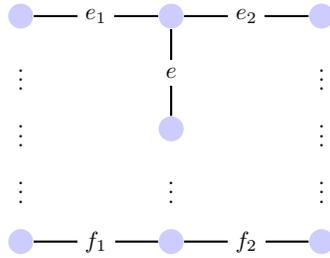

\begin{proof} We may choose two cycles $C_1$ and $C_2$ (not both of which have length two) which contain a common path $P$ of length at least one, but otherwise have no common edges. Let $e$ be the first edge in $P$. Then there exist edges $e_1 \in C_1\setminus C_2$, $e_2 \in C_2\setminus C_1$ that are adjacent to $e$. Similarly, the last edge of $P$ (which may be $e$) is adjacent to edges $f_1 \in C_1\setminus C_2$, $f_2 \in C_2\setminus C_1$. See Figure \ref{twocyclespic}. Since at most one of $C_1, C_2$ has length two, we may assume, without loss of generality, that $e_1 \neq f_1$. A suitable selection of four terminal edges for the immersion is then $e, e_1, e_2,$ and $f_1$. We note that $e, e_1$, and $e_2$ are all adjacent to each other, so we need only connect each to $f_1$ along some s.e.d. set of paths. The path $P$ and the paths corresponding to $C_1 \setminus P$, and $f_1 \cup C_2 \setminus P$ achieve this.  To see that $L(H)$ has a $K_4$-minor, we need only select as our subgraphs the three edges $e, e_1, e_2$ and then, for the fourth subgraph, the rest of the edges of $H$.
\end{proof}

We can now provide the following characterization for $K_4$-immersions and $K_4$-minors in line graphs. 

\begin{theorem}\label{K4} \mg{Let $G$ be a line graph. Then $G$ has a $K_4$-immersion iff it has a $K_4$-minor iff for all graphs $H$ such that $G = L(H)$, either $\Delta(H) \geq 4$ or $H$ contains two different cycles that share at least one edge, where at most one of these cycles has length two.}
\end{theorem}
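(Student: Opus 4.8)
The plan is to prove the equivalence of three statements by establishing a cycle of implications. Let me denote the three conditions as follows: (A) $G$ has a $K_4$-immersion; (B) $G$ has a $K_4$-minor; (C) for every $H$ with $G=L(H)$, either $\Delta(H)\geq 4$ or $H$ contains two distinct cycles sharing at least one edge, with at most one of these cycles having length two. I would prove (C)$\Rightarrow$(A), (C)$\Rightarrow$(B), and then close the loop by showing that the negation of (C) implies the negation of both (A) and (B), i.e. $\neg$(C)$\Rightarrow\neg$(A) and $\neg$(C)$\Rightarrow\neg$(B). The forward directions (C)$\Rightarrow$(A) and (C)$\Rightarrow$(B) split into two easy cases. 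If $\Delta(H)\geq 4$, then $H$ has a vertex incident to at least four edges; these four edges form a $K_4$ in $L(H)$ as a subgraph, which trivially gives both a $K_4$-immersion and a $K_4$-minor. Otherwise $H$ has the two-cycle configuration, and Lemma~\ref{twocycles} directly hands us both a $K_4$-immersion and a $K_4$-minor.

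The real content is the contrapositive direction: assuming $\neg$(C), I must produce some $H$ with $G=L(H)$, $\Delta(H)\leq 3$, and no two distinct edge-sharing cycles of the allowed type, and from this structural hypothesis on $H$ derive that $L(H)$ has neither a $K_4$-minor nor a $K_4$-immersion. First I would analyze what the structural constraint forces: with $\Delta(H)\leq 3$ and essentially no pair of cycles sharing an edge (except possibly pairs where both cycles have length two, which are excluded by the ``at most one has length two'' phrasing), the graph $H$ is extremely restricted. I expect that each edge of $H$ lies on at most one cycle of length at least three, so the cycle space is very limited; concretely, I anticipate that every block of $H$ is either a single edge, a single cycle, or a bunch of parallel edges (a ``theta-free'' / ``no two cycles share an edge'' condition). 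The key observation is that the forbidden configuration is exactly what would let two independent cycles overlap, so ruling it out means the cycles of $H$ are edge-disjoint apart from the parallel-edge exception.

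With this structural picture of $H$ in hand, the argument for $\neg$(B) (no $K_4$-minor) is cleanest via Theorem~\ref{HLG}'s characterization: $L(H)$ has a $K_4$-minor iff $H$ contains four connected subgraphs, each with an edge, pairwise edge-disjoint but pairwise sharing a vertex. I would show that the restricted structure of $H$ (max degree three, no overlapping cycles) cannot support four such subgraphs: four pairwise-intersecting edge-disjoint subgraphs would force either a vertex of high degree or a pair of overlapping cycles, a contradiction. For $\neg$(A) (no $K_4$-immersion), I would use Observation~\ref{sed}: a $K_4$-immersion requires four terminal edges joined pairwise by a semi-edge-disjoint family of paths. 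The degree and cycle restrictions bound the available ``semi-edge-disjoint capacity'' at each vertex, and I would argue that four terminals with six pairwise s.e.d. paths cannot be routed in such a sparse $H$.

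The main obstacle I anticipate is precisely the careful case analysis of the structure of $H$ under $\neg$(C), and in particular handling the subtlety introduced by length-two cycles (parallel edges): the hypothesis forbids two edge-sharing cycles only when \emph{at most one} has length two, so pairs of length-two cycles sharing an edge (i.e. triples of parallel edges, or two pairs of parallel edges sharing an edge) are \emph{not} forbidden, and I must verify that such configurations genuinely fail to produce a $K_4$-immersion or $K_4$-minor. Managing these parallel-edge cases correctly, and confirming that the blocks of $H$ really do decompose into isolated cycles and parallel-edge bundles with no problematic overlaps, is where the delicate work lies; the two non-containment conclusions should then follow from the degree bound together with Observation~\ref{sed} and Theorem~\ref{HLG} without much further difficulty.
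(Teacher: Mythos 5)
Your overall logical architecture is sound (proving $C\Rightarrow A$, $C\Rightarrow B$, $\neg C\Rightarrow\neg A$, $\neg C\Rightarrow\neg B$ does establish the three-way equivalence), the easy forward directions are handled exactly as in the paper (a $\Delta(H)\geq 4$ vertex gives a $K_4$ subgraph of $L(H)$; otherwise Lemma \ref{twocycles} applies), and your structural picture of $H$ under $\neg C$ is correct: with $\Delta(H)\leq 3$, a triple parallel edge is the entire component (so $L(H)\supseteq K_3$ only), no two other cycles can share an edge or even a vertex, and $H$ is a tree with some vertices blown up into cycles. However, there is a genuine gap: the entire substance of the theorem is the claim that this restricted structure admits no $K_4$-immersion, and you never actually argue it. ``The degree and cycle restrictions bound the available semi-edge-disjoint capacity'' and ``cannot be routed in such a sparse $H$'' is precisely the statement to be proved, not a proof. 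The paper's argument here is concrete: four terminal edges cannot all lie on one cycle (a cycle is its own line graph and does not immerse $K_4$), so after relabelling there is a cut edge $e$ that both s.e.d.\ paths from $e_1$ to $\{e_2,e_3\}$ must traverse; since each end of $e$ meets at most two other edges, the fourth terminal $e_4$ cannot sit on the $\{e_2,e_3\}$ side, whence both paths from $e_4$ to $\{e_2,e_3\}$ must also traverse $e$, and four such paths through $e$ cannot be pairwise semi-edge-disjoint. Some version of this cut-edge argument (or an equivalent counting argument at a separating edge) is required; without it the proof is incomplete.

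A secondary point: your plan to prove $\neg C\Rightarrow\neg B$ separately via the Reed--Seymour characterization (four pairwise edge-disjoint, pairwise intersecting connected subgraphs) is unnecessary extra work, and the Helly-type analysis it would require is not as routine as you suggest. The paper avoids it entirely by noting that since $\Delta(K_4)=3$, a $K_4$-minor in any graph yields a topological $K_4$-minor and hence a $K_4$-immersion; thus $\neg A\Rightarrow\neg B$ comes for free and only the immersion non-existence needs to be established. You may of course keep your route, but then the minor-exclusion argument must also be written out in full rather than asserted.
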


Before we prove Theorem \ref{K4}, note that we can actually add ``iff it has a topological $K_4$-minor'' to its statement for free. This is because, for any graph $F$ with $\Delta(F)\leq 3$, a graph $G$ has an $F$-minor iff if has a topological $F$-minor, as contraction can be mimicked by vertex-suppression in this case (see, for example, page 269 of \cite{bondy}). The same is not true if we replace ``$G$ has an $F$-minor'' with ``$G$ has an $F$-immersion'', even if $F=K_4$. For example, the graph in Figure \ref{K4ex} immerses $K_4$, but does not contain $K_4$ as a minor. (To see the $K_4$-immersion, split off the pair of edges $uv$ \mg{and} $vw$, and then continue to split off along the resulting 3-path from $x$ to $z$). Hence the assumption of ``line graph'' in Theorem \ref{K4} (and Theorem \ref{minorimm}) is indeed crucial. The graph in Figure \ref{K4ex} is of course not a line graph, as it contains an induced $K_{1,3}$.

\begin{figure}
\centering
\begin{tikzpicture}

  \node[vertex] (v1) at (0,1) {x};
  \node[vertex] (v2) at (2,3)  {v};
  \node[vertex] (v3) at (4,1)  {z};
  \node[vertex] (v4) at (2,1) {y};
  \node[vertex] (v5) at (0,3)  {u};
  \node[vertex] (v6) at (4,3) {w};

  \foreach \from/\to in {v1/v2, v2/v3, v3/v4, v4/v1, v2/v4, v1/v5, v5/v2, v2/v6, v6/v3}
	\path[edge] (\from) -- (\to);

\end{tikzpicture}
\caption{A graph (which is not a line graph) that contains a $K_4$ immersion but no $K_4$ minor.}
\label{K4ex}
\end{figure}
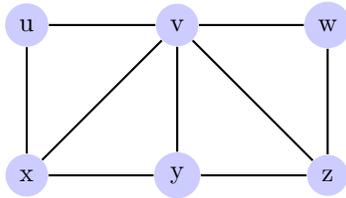

\begin{proof} We first note the truth of all the backwards implications. \mg{If $G$ contains a $K_4$-minor, then it must also contain a topological $K_4$-minor by our above comments. Hence we immediately get that $G$ has a $K_4$-immersion as well. If $\Delta(H) \geq 4$, then $G$ contains $K_4$ as a subgraph and so trivially as an immersion and minor as well. If $H$ has instead the cycle condition, then $G$ contains a $K_4$-minor and a $K_4$-immersion by Lemma \ref{twocycles}.}

Suppose now that \mg{for some $H$ such that $G = L(H)$,} $\Delta(H) \leq 3$ and that no two cycles in $H$ share an edge unless both are of length two. To complete our proof, it suffices to show that \mg{$G$} has no $K_4$-immersion. We may assume that $H$ has no pair of cycles sharing an edge, since this would correspond to a parallel edge of multiplicity three, which would have to be all of $H$ given our degree restriction (we can assume that $H$ is connected). Two cycles in $H$ cannot share a vertex either, as otherwise such a vertex would have degree at least 4. It follows that $H$ can be obtained from a tree $T$ by replacing some number of the vertices in $T$ by cycles, such that if vertex $v\in T$ is replaced by a cycle $C_v$, then each vertex $w$ adjacent to $v$ in $T$ is adjacent to exactly one vertex in $C_v$  (and if $w$ is also replaced by $C_w$, then there is exactly one edge joining $C_v$ and $C_w$).

Suppose, for a contradiction, that \mg{$G$} has a $K_4$-immersion. Then there exists a set $S$ of four terminal edges in $H$ and a  s.e.d. set of paths between them. We first observe that all four edges in $S$ cannot come from a single cycle in $H$. The only paths between edges of any cycle in $H$ lie along that cycle, and a cycle (which is its own line graph) does not immerse $K_4$. So, there are edges $e_1, e_2, e_3\in S$ such that \mg{neither $e_2$ nor $e_3$ lies on a common cycle with $e_1$. Hence, subject to renaming $e_1,e_2,e_3$, there is a cut edge $e$ such that both of the s.e.d. paths from $e_1$ to $\{e_2, e_3\}$ must contain $e$ (we ``swap" $e_1$ with either $e_2$ or $e_3$ if $e_1$ lies on a cycle that intersects a path between $e_2$ and $e_3$). Note that it is possible that $e_1=e$.}  Since $\Delta(H) \leq 3$, each end of $e$ is incident with at most two other edges in $H$. Hence while there can be two s.e.d. paths from $e_1$ to $\{e_2, e_3\}$, the fourth terminal edge $e_4$ cannot be on the same side of the edge-cut as $\{e_2, e_3\}$. Hence we may assume that the fourth terminal edge is on the opposite side of the edge-cut (note that we may avoid the case where the fourth edge is $e$ by simply choosing $e_1=e$ in this case). However now both s.e.d. paths from $e_4$ to $\{e_2, e_3\}$ must contain $e$, and these cannot be s.e.d. from the two paths from $e_1$ to $\{e_2, e_3\}$. Hence \mg{$G$} does not immerse $K_4$.
\end{proof}

We are now able to prove our final result of this paper, which we restate now for convenience.

\setcounter{section}{1}
\setcounter{theorem}{3}
\begin{theorem} Let $G$ be a line graph. If $t\leq 4$, then $G$ has a $K_t$-immersion iff $G$ has a $K_t$-minor. When \mg{$t\geq5$} this equivalence fails in both directions: there exists a line graph with a \mg{$K_t$}-immersion but no \mg{$K_t$}-minor, and there exists a line graph with a \mg{$K_t$}-minor but no \mg{$K_t$}-immersion.
\end{theorem}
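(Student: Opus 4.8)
The plan is to treat $t\le4$ and $t\ge5$ separately, and to get $t\le4$ almost for free. The case $t=4$ is exactly the first two ``iff''s of Theorem \ref{K4}. For $t\le 3$ the target $K_t$ has maximum degree at most two, so (as noted just before the proof of Theorem \ref{K4}) a $K_t$-minor coincides with a topological $K_t$-minor, which immediately yields a $K_t$-immersion; the reverse implications are routine ($K_3$-immersion, $K_3$-minor and ``contains a cycle'' all coincide, and $t\le2$ is trivial). So I would dispose of $t\le4$ in a sentence and spend the real effort producing, for each $t\ge5$, line graphs witnessing both failures.

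\textbf{A $K_t$-minor but no $K_t$-immersion.} For this direction I would generalize Figure \ref{RScounter} exactly as advertised there. Let $H_t$ consist of a vertex $u$ joined to $v$ by $t-3$ internally disjoint paths $u\,a_i\,v$ $(1\le i\le t-3)$, one path $u\,c\,w$, and two parallel edges between $v$ and $w$ (for $t=5$ this is precisely Figure \ref{RScounter}). The $K_t$-minor is exhibited by the minor characterization recalled in Section 3: the $t-3$ paths $u a_i v$, the path $u c w$, and the two $vw$-edges are $t$ connected subgraphs, pairwise edge-disjoint and pairwise meeting in one of $u,v,w$. To rule out a $K_t$-immersion I use the basic observation from Section 2 that it would require $t$ edges of edge-degree at least $t-1$; a direct count shows that in $H_t$ only the $t-1$ edges incident with $v$ (namely the $t-3$ edges $a_iv$ and the two $vw$-edges) have edge-degree $t-1$, while every other edge has edge-degree at most $t-2$. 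Hence no $K_t$-immersion exists.

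\textbf{A $K_t$-immersion but no $K_t$-minor.} Here I would first record a clean seed for $t=5$: the octahedron $L(K_4)$. Taking as terminals the five edges of $K_4$ other than $34$, every pair sharing a vertex is joined directly, and the two ``opposite'' pairs $13,24$ and $14,23$ are routed as $13,34,24$ and $14,34,23$. The only reused edge is $34$ and no edge-adjacency repeats, so by Observation \ref{sed} this is a $K_5$-immersion; but $L(K_4)$ is planar and therefore has no $K_5$-minor. For general $t\ge5$ the strategy is the same in spirit: find $H$ whose line graph has at least $t$ edges of edge-degree $t-1$ and admits a semi-edge-disjoint system of paths between $t$ terminals — routing the ``non-adjacent'' terminal pairs through otherwise unused edges, just as above and as in the Latin-square routing of the proof of Theorem \ref{mH} — while keeping $H$ sparse enough that, by the minor characterization of Section 3, it contains no $t$ pairwise-edge-disjoint connected subgraphs that pairwise share a vertex. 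Natural candidates are the triangular graphs $L(K_n)$, which are $(2n-4)$-regular so that every vertex already has the degree needed to immerse $K_{2n-3}$, together with small edge-modifications to reach even values of $t$.

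\textbf{The main obstacle.} I expect the crux to be precisely this last construction. The immersion half is delicate but manageable — it is a scheduling problem of pushing the ``long'' terminal pairs through spare edges without repeating an edge-adjacency, for which the Latin-square device of Theorem \ref{mH} is the right tool. The genuinely hard part is certifying the \emph{absence} of a $K_t$-minor, i.e.\ proving the chosen $H$ admits no family of $t$ pairwise-intersecting edge-disjoint connected subgraphs: unlike planarity, which settles $t=5$ outright for the octahedron, no single global invariant does this uniformly in $t$, so I anticipate a structural/sparsity analysis of $H$ tailored to the construction, and this is where the ``bit more care'' promised in the introduction is spent.
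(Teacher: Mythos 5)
Your handling of $t\le 4$ and of the ``$K_t$-minor but no $K_t$-immersion'' direction matches the paper: the former is exactly the reduction to Theorem \ref{K4} plus the trivial small cases, and the latter is the advertised generalization of Figure \ref{RScounter}, with the edge-degree count correctly showing that only the $t-1$ edges at $v$ have edge-degree $t-1$. Your octahedron example for $t=5$ is also correct and is a pleasant alternative to the paper's: $L(K_4)$ is planar, hence $K_5$-minor-free, and your routing of the two long pairs through the edge $34$ repeats no edge-adjacency, so Observation \ref{sed} gives the $K_5$-immersion.

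The genuine gap is the ``$K_t$-immersion but no $K_t$-minor'' direction for $t\ge 6$, which you leave as a strategy rather than a proof --- and the strategy you propose cannot be repaired. You suggest taking $H=K_n$ so that $L(K_n)$ might immerse $K_{2n-3}$ but have no $K_{2n-3}$-minor. But line graphs of complete graphs have clique minors far larger than their degree: by the minor characterization of Section 3 one only needs many pairwise-intersecting, pairwise edge-disjoint connected subgraphs of $K_n$, and (for instance) the lines of a projective plane of order $q$ realized as edge-disjoint, pairwise-intersecting $(q+1)$-cliques in $K_{q^2+q+1}$ already give a $K_n$-minor in $L(K_n)$, while denser constructions push the Hadwiger number of $L(K_n)$ to order $n^{3/2}$ (this is precisely the subject of the reference \cite{counterex}). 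So the minor-freeness you would need fails for your candidates, not merely resists proof. The paper instead takes a specific sparse $H$: a triangle on $\{x_1,x_2,x_3\}$ completely joined to an independent set $Y$ of size $t-4$ (Figure \ref{K5ex}). It verifies the $K_t$-immersion by hand --- terminals are the three triangle edges, the $t-4$ edges $x_1y_i$, and one edge $x_2y_1$, with explicit two-edge detours through $x_3y_i$ and $x_2y_j$ forming a semi-edge-disjoint system --- and it certifies $K_t$-minor-freeness not by an ad hoc sparsity analysis but by observing that $L(H)$ is a subgraph of a graph already proved $K_t$-minor-free in Proposition 13 of \cite{counterex}. Without either such an explicit family or an external minor-freeness result, your proposal does not establish the theorem for $t\ge 6$.
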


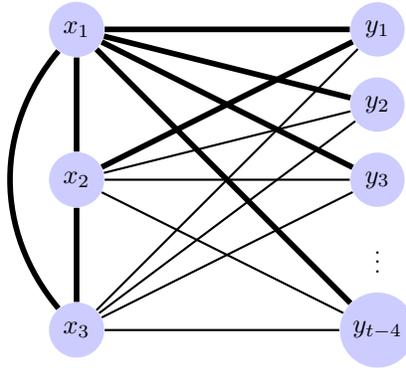
\begin{figure}
\centering
\begin{tikzpicture}

  \node[vertex] (x1) at (0,4) {$x_1$};
  \node[vertex] (x2) at (0,2)  {$x_2$};
  \node[vertex] (x3) at (0,0)  {$x_3$};

  \path[thickedge] (x1) to [bend right = 40] (x3);
  \path[thickedge] (x1) to (x2);
  \path[thickedge] (x2) to (x3);

  \node[vertex] (y1) at (4,4) {$y_1$};
  \node[vertex] (y2) at (4,3) {$y_2$};
  \node[vertex] (y3) at (4,2) {$y_3$};
  \node[vertex] (y) at (4,0) {$y_{t-4}$};

  \node[label] at (4,1) {$\vdots$};

  \foreach \from/\to in {x2/y2,x2/y3,x2/y,x3/y1,x3/y2,x3/y3,x3/y}
	\path[edge] (\from) -- (\to);

  \foreach \from/\to in {x1/y1,x1/y2,x1/y3,x1/y,x2/y1}
	\path[thickedge] (\from) -- (\to);

\end{tikzpicture}
\caption{A graph $H$ such that $L(H)$ has a $K_t$-immersion but not a $K_t$-minor for $t \geq 5$.}
\label{K5ex}
\end{figure}

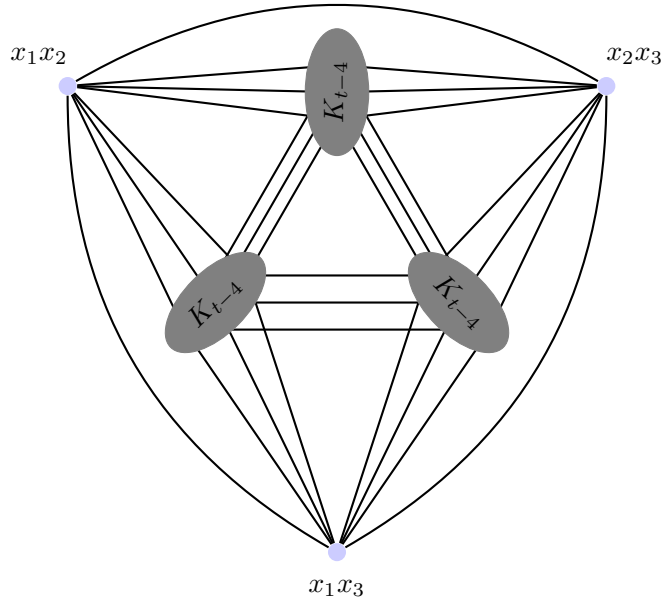
\begin{figure}
\centering
\begin{tikzpicture}[scale = 1.2, every node/.style={scale=1.2}]

	\node[Svertex] (a1) at (0,0.6) {};
	\node[Svertex] (a2) at (0,0.3) {};
	\node[Svertex] (a3) at (0,0) {};

	\node[Svertex] (b1) at (-1.69279,-2.332) {};
	\node[Svertex] (b2) at (-1.34638,-2.032) {};
	\node[Svertex] (b3) at (-1,-1.732) {};

	\node[Svertex] (c1) at (1.69279,-2.332) {};
	\node[Svertex] (c2) at (1.34638,-2.032) {};
	\node[Svertex] (c3) at (1,-1.732) {};

	\node[Svertex] (a) at (0,-4.8) {};
	\node[Svertex] (b) at (2.983746,0.368) {};
	\node[Svertex] (c) at (-2.983746,0.368) {};

	\foreach \from/\to in {a1/b1,b1/c1,c1/a1,a2/b2,b2/c2,c2/a2,a3/b3,b3/c3,c3/a3,a/b1,a/b2,a/b3,a/c1,a/c2,a/c3,b/a1,b/a2,b/a3,b/c1,b/c2,b/c3,c/a1,c/a2,c/a3,c/b1,c/b2,c/b3}
		\path[edge] (\from) -- (\to);

	\draw[rotate around= {45:(-1.34638,-2.032)},gray, fill = gray, fill opacity = 1] (-1.34638,-2.032) ellipse (20pt and 10pt);
	\draw[rotate around= {135:(1.34638,-2.032)},gray, fill = gray, fill opacity = 1] (1.34638,-2.032) ellipse (20pt and 10pt);
	\draw[rotate around= {90:(0,0.3)},gray, fill = gray, fill opacity = 1] (0,0.3) ellipse (20pt and 10pt);

	\node[label2,rotate = 45] at (-1.34638,-2.032) {$K_{t-4}$};
	\node[label2,rotate = 315] at (1.34638,-2.032) {$K_{t-4}$};
	\node[label2,rotate = 90] at (0,0.3) {$K_{t-4}$};

	\path[edge] (a) to [bend right = 30] (b);
	\path[edge] (b) to [bend right = 30] (c);
	\path[edge] (c) to [bend right = 30] (a);

	\node[label] at (-3.3,0.7) {$x_1x_2$};
	\node[label] at (3.3,0.7) {$x_2x_3$};
	\node[label] at (0, -5.2) {$x_1x_3$};

\end{tikzpicture}
\caption{A line graph containing a $K_t$-immersion but not a $K_t$-minor for $t \geq 5$. Each of $x_1x_2, x_2x_3, x_1x_3$ is joined completely to two of the $K_{t-4}$ cliques. Furthermore, there are $t-4$ disjoint triangles between the $K_{t-4}$ cliques.}
\label{K5exLine}
\end{figure}

\begin{proof}
The characterization in Theorem \ref{K4} says that $G$ has a $K_4$-immersion if and only if it has a $K_4$-minor. Since $G$ is necessarily simple, we note that it contains a $K_3$-minor iff it contains a $K_3$-immersion iff it contains a cycle (of any length). For $t =1, 2$, the required equivalence is trivial.

Consider the graph $H$ pictured in Figure \ref{RScounter}. As discussed in Section 2, $L(H)$ contains $K_5$ as a minor but not as an immersion. (Moreover, as we noted in Section 2, this $H$ can be easily modified to yield a line graph containing a $K_t$-minor but no $K_t$-immersion for any $t\geq 5$).


\mg{Consider now the graphs pictured in Figures \ref{K5ex} and \ref{K5exLine}; note that the latter is $L(H)$ where $H$ is the graph in Figure \ref{K5ex}. Moreover, $L(H)$ is a subgraph of a graph described and shown to be $K_t$-minor free by Cames van Batenburg et al. in \cite{counterex}.  In particular, in Proposition 13 of \cite{counterex} choose $k=t$ and $\Delta = k+1$ and note that $L(H)$ is clearly a subgraph of $G_{k,\Delta}$ as pictured in Figure 1 of \cite{counterex}. }

\mg{It remains now only to show that $L(H)$ has a $K_t$-immersion. To this end, let $X = \{x_1,x_2,x_3\}$ and $Y = \{y_1,y_2,...,y_{t-4}\}$ as in Figure \ref{K5ex}. Select as terminal edges the three edges induced by $X$, the $t-4$ edges joining $x_1$ to $Y$, and the edge $x_2y_1$ (any edge joining $x_2$ to a vertex of $Y$ would suffice). These edges have been darkened in Figure \ref{K5ex}. We claim that the necessary s.e.d. set of paths between these edges exists. Many of these paths are taken care of by immediate edge adjacencies. The remaining paths needed are the following: paths between $x_1y_i$ and $x_2x_3$ for each $1 \leq i \leq t-4$, paths between $x_1y_j$ and $x_2y_1$ for each $2 \leq j \leq t-4$, and a path between $x_1x_3$ and $x_2y_1$. For each $1 \leq i \leq t-4$, choose the path with edges $x_1y_i,x_3y_i,x_2x_3$ and denote the collection of such paths by $\mathcal{P}_I$. For each $2 \leq j \leq t-4$, choose the path with edges $x_1y_j,x_2y_j,x_2y_1$ and denote the collection of such paths by $\mathcal{P}_J$. Finally, choose the path $P^*$ with edges $x_1x_3,x_3y_1,x_2y_1$. We first note that no two terminal edges appear consecutively in any of these new paths, so they are s.e.d. with the edge adjacencies already being used. Furthermore, if $P_1$ and $P_2$ are distinct paths in $\mathcal{P}_I \cup \mathcal{P}_J \cup \{P^*\}$, then we observe that $P_1$ and $P_2$ have at most one edge in common. It follows that $\mathcal{P}_I \cup \mathcal{P}_J \cup \{P^*\}$ is s.e.d., and coupled with the necessary edge adjacencies this yields a complete s.e.d. set as needed.  }

\end{proof}

\bibliographystyle{amsplain}

\end{document}